\newcommand{\N}{\mathbb{N}}
\newcommand{\Z}{\mathbb{Z}}
\newcommand{\Q}{\mathbb{Q}}
\newcommand{\C}{\mathbb{C}}
\newcommand{\F}{\mathbb{F}}
\renewcommand{\P}{\mathbb{P}} 
\newcommand{\G}{\Gamma} 
\newcommand{\T}{\mathbf{T}} 
\newcommand{\Tr}{\mathcal{T}} 
\renewcommand{\c}{\mathbf{c}} 
\newcommand{\I}{\mathcal{I}} 
\newcommand{\g}{\gamma}
\renewcommand{\a}{\alpha}
\renewcommand{\b}{\beta}
\newcommand{\e}{\overline{e}}
\newcommand{\n}{\mathfrak{n}}
\renewcommand{\O}{\Omega}
\renewcommand{\o}{\omega}
\newcommand{\Hom}{\mathrm{Hom}}
\newcommand{\mf}{\,|_{k,m}}
\newtheorem{thm}{Theorem}[section]
\newtheorem{prop}[thm]{Proposition}
\newtheorem{lem}[thm]{Lemma}
\newtheorem{cor}[thm]{Corollary}
\newtheorem{defin}[thm]{Definition}
\newtheorem{rem}[thm]{Remark}
\newtheorem{exe}[thm]{Example}
\newtheorem{conj}[thm]{Conjecture}
\renewcommand{\matrix}[4]{\left(\begin{array}{cc} {#1} & {#2} \\ {#3} & {#4} \end{array}\right)}
\title{On the Atkin $U_t$-operator for $\Gamma_0(t)$-invariant Drinfeld cusp forms}
\author{Andrea Bandini}
\address{{\sc Andrea Bandini}: Universit\`a degli Studi di Parma \\
   Dipartimento di Scienze Matematiche, Fisiche e Informatiche\\
   Parco Area delle Scienze, 53/A \\
   43124 Parma - Italy
   }
\email{andrea.bandini@unipr.it}
\author{Maria Valentino}\thanks{M. Valentino is supported by an outgoing Marie-Curie fellowship of INdAM}
   \address{{\sc Maria Valentino}: Universit\`a degli Studi di Parma \\
   Dipartimento di Scienze Matematiche, Fisiche e Informatiche\\
   Parco Area delle Scienze, 53/A \\
   43124 Parma - Italy
   }
   \email{maria.valentino@unipr.it}
   \subjclass[2010]{Primary 11F52, 11F25; Secondary 11B65, 20E08, 11C20.}
   \keywords{Drinfeld cusp forms, harmonic coycles, newforms and oldforms, Atkin-Lehner operator,
             slopes of eigenforms, diagonalizability}
\begin{document}

\begin{abstract}
We study the diagonalizability of the Atkin $U_t$-operator acting on Drinfeld cusp forms for $\G_0(t)$: starting with the slopes
of eigenvalues and then moving to the space of cusp forms for $\G_1(t)$ to use Teitelbaum's interpretation as harmonic cocycles
which makes computations more explicit. We prove $U_t$ is diagonalizable in odd characteristic for (relatively) small weights
and explicitly compute the eigenvalues. In even characteristic we show that it is not diagonalizable when the weight is odd
(except for the trivial cases) and prove some cases of non diagonalizability in even weight as well.
We also formulate a few conjectures, supported by numerical search, about diagonalizability of $U_t$ and the slopes
of its eigenforms.
\end{abstract}

\maketitle

\section{Introduction}
Let $N,k\in\Z_{\geqslant 0}$ and denote by $S_k(N)$ the $\C$-vector space of cuspidal modular forms of level $N$ and weight $k$.
Hecke operators $T_n$, $n\geqslant 1$, are defined on $S_k(N)$ and when $p|N$, $T_p$ is also known as the {\em Atkin},
or {\em Atkin-Lehner $U_p$-operator}.\\
The $U_p$-operator is well known in literature for detecting those modular forms which belong to a $p$-adic fa\-mi\-ly. Let $f$ be an
eigenform of $U_p$ and $\a$ its eigenvalue. The {\em slope} of $f$ is defined to be the $p$-adic valuation of $\a$,
say $v_p(\a)$. Coleman in \cite{C} proved the existence
of a lot of $p$-adic families thanks to the fact that {\em overconvergent} modular forms of small slope are classical.
Moreover, eigenforms of slope zero
are the so-called {\em ordinary} forms which play a crucial role in Hida theory (\cite{H1}, \cite{H2}). \\
When $p$ is a prime number not dividing $N$, using Petersson inner product, the action of $T_p$ is semisimple on cusp forms.
This is no longer true for $U_p$ which fails to be diagonalizable. Some results on its semisimplicity are obtained in \cite{CE}.

In this work we shall address the problem of the diagonalizability of the analogue of $U_p$ in the realm of global
function fields.\\
When dealing with a field in positive characteristic there are two different translations of classical modular forms theory:
automorphic forms and Drinfeld modular forms.
The first ones are functions on adelic groups with values in fields of characteristic zero, while the latter are functions
on a rigid analytic space with values in a field of positive characteristic.\\
In this paper we will deal with Drinfeld modular forms which are equipped with a Hecke action as well.
In this context, the lack of an adequate analogous of Petersson
inner product leaves open the question about the diagonalizability  of the Hecke operators.
For the analogue of the $T_p$ some partial answers were given by Li and Meemark in \cite{LM} and B\"ockle and Pink in \cite{B}:
in contrast to the characteristic $0$ case these operators are not always diagonalizable. We shall focus on the diagonalizability
of the operator $U_t$ on forms of level $t$ and, in particular, on the slopes of its eigenforms.

We now introduce some notations, but for details and precise definitions of all objects appearing in this introduction the reader is
referred to Section \ref{SecNotations}.

Let $F$ be the rational function field $F =\mathbb{F}_{q}(t)$, with $q=p^r$ where $p\in \Z$ is a prime. Denote by $\pi$
the prime corresponding to $1/t$ and let $F_\infty = \F_q((1/t))$ be the
completion of $F$ at $\pi$ and let $\C_\infty$ denote the completion of an algebraic closure of $F_\infty$.
The Drinfeld upper half plane is $\O:= \P^1(\C_\infty) - \P^1(F_\infty)$. The $\C_\infty$-vector space of Drinfeld modular forms of weight
$k\geqslant 0$ and type $m\in \Z$ for a congruence subgroup $\G$ is denoted by $M_{k,m}(\G)$. The corresponding space of cusp forms is
indicated by $S^1_{k,m}(\G)$ and that of double cusp forms by $S^2_{k,m}(\G)$.

When dealing with number fields there is a direct relation between the Hecke
eigenvalues and the Fourier coefficients of a given modular form. In the function field setting,
even if it is still possible to associate with every Drinfeld modular
form a power series expansion with respect to a canonical uniformizer $u$ at the cusp at infinity (see \cite{G2}),
the action of the Hecke operators on expansions is not well understood and difficult to handle.\\
In order to avoid this problem we exploited a different reformulation
of Drinfeld cusp forms. The Bruhat-Tits tree
$\Tr$ (see Section \ref{SecTree}) is a combinatorial counterpart of $\O$ and cusp forms have a reinterpretation as $\G$-invariant
harmonic cocycles (Section \ref{SecIsomModFrmHarCoc})
\[  S^1_{k,m}(\G)\simeq C^{har}_{k,m}(\G)\,. \]
We are mainly interested in the case $\G=\G_0(t)$ (or $\G_0(\mathfrak{m})$ in general), but computations are
more feasible for $\G=\G_1(t)$. Since a Hecke action can be carried out on harmonic cocycles as well,
we will use this combinatorial interpretation along with a detailed description of a fundamental domain for $\G=\G_1(t)$
(obtained from \cite{GN}) to get the matrices corresponding to our operator $U_t$: we will find out that the
coefficients of these matrices are binomial coefficients depending only on the weight of the space of cusp
forms and the characteristic of the field involved. \\
An explicit description of the subspace $S^1_{k,m}(\G_0(t))$ inside $S^1_k(\G_1(t))$ (we do not mention the type here
because it is not relevant since all matrices in $\G_1(t)$ have trivial determinant) and a careful study of these binomial
coefficients will allow us to study the diagonalizability of $U_t$ on $\G_0(t)$-invariant cusp forms in some nontrivial cases
or, at least, to compute the slopes of its eigenforms. We shall address the diagonalizability on the whole $\Gamma_1(t)$-invariant space (at least in small weights) in 
\cite{BV}.

The paper is organized as follows.

\noindent In Section \ref{SecNotations} we set the notations and recall the main objects we shall work with. Moreover,
we recall the action of $U_t$ operators on harmonic cocycles (Section \ref{SecHecke}).

\noindent In Section \ref{SecSlopes} we study slopes for eigenforms for the action of $U_t$ on $S^1_{k,m}(\G_0(t))$,
by dividing cuspidal forms of level $t$ into oldforms (i.e., arising from $S^1_{k,m}(\G_0(1))\,$) and newforms.
Here we are able to say something on diagonalizability only in the very special case of odd weight $k$ in even characteristic.
Indeed

\begin{thm}\label{IntroThm1}
Let $k$ be odd and $q$ be even, then the action of $U_t$ on $S^1_{k,m}(\G_0(t))$ is not diagonalizable.
\end{thm}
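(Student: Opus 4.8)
The plan is to reduce the claim to the action of $U_t$ on the old part of $S^1_{k,m}(\G_0(t))$, where the relevant matrices can be written down explicitly, and to show that the hypotheses force a single Jordan block. Recall the two degeneracy maps $\iota_1,\iota_t\colon S^1_{k,m}(\G_0(1))\hookrightarrow S^1_{k,m}(\G_0(t))$, with $\iota_1$ the inclusion and $\iota_t$ induced by $\matrix{t}{0}{0}{1}$, whose images span the (necessarily $U_t$-stable) oldspace. Since $\C_\infty$ is algebraically closed, $T_t$ admits an eigenvector $0\neq g\in S^1_{k,m}(\G_0(1))$, say $T_tg=ag$; then $W_g:=\langle\iota_1g,\iota_tg\rangle$ is two dimensional and $U_t$-stable, and --- arguing as in the classical level-$t$ computation from the double-coset description of $U_t$, via the relations $U_t\iota_t=\mathrm{id}$ and $U_t\iota_1=a\,\iota_1-c\,t^{w}\iota_t$ on $g$ for a suitable unit $c\in\F_q^*$ and exponent $w=w(k)$ --- the operator $U_t$ acts on $W_g$ through $\matrix{a}{1}{-c\,t^{w}}{0}$, with characteristic polynomial $X^2-aX+c\,t^{w}$.

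Now the hypotheses enter. When $q$ is even the discriminant of $X^2-aX+c\,t^{w}$ is $a^2$, so its two roots coincide precisely when $a=0$, and in that case $U_t|_{W_g}$ is represented by $\matrix{0}{1}{-c\,t^{w}}{0}$: a matrix with a unique eigenvalue $\delta$ (a square root of $c\,t^{w}$ in $\C_\infty$) and one-dimensional eigenspace, hence non-semisimple. Since the restriction of a diagonalizable operator to an invariant subspace is again diagonalizable, this shows $U_t$ on all of $S^1_{k,m}(\G_0(t))$ is not diagonalizable. In terms of slopes, the two eigenvalues of $U_t$ on $W_g$ have equal $\pi$-adic valuation whenever $v_\pi(a)$ is not too negative, and in even characteristic with $a=0$ they are literally equal --- which is exactly the obstruction. (One could instead try to work in the new part, where $U_t^2$ equals a unit times a power of $t$, a perfect square in characteristic $2$; but that would require knowing the newspace to be spanned by $U_t$-eigenforms, which is unclear, so the old part is the safer place to look.)

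It remains to prove --- and this is where I expect the real work to be --- that for $k$ odd and $q$ even the operator $T_t$ on $S^1_{k,m}(\G_0(1))$ is not invertible, i.e.\ has a nonzero kernel. I would derive this from the explicit coset description of $T_t$ (one ``diagonal'' term $t^{\,k-1}\langle t\rangle V_t$ together with the $q$ shift operators indexed by $\b\in\F_q$): under the normalizations in force the diagonal and shift contributions are forced to cancel when $\mathrm{char}=2$ and $k$ is odd, because the $t^{\,k-1}$-type factor and the binomial coefficients governing the slash action reduce modulo $2$ to a single parity obstruction --- the same elementary mechanism used later for $\G_1(t)$. Alternatively, one may appeal to the known nilpotency-type behaviour of $T_t$ on level-one Drinfeld cusp forms in characteristic $p$. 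Finally, in the degenerate range $S^1_{k,m}(\G_0(1))=0$ the oldspace is trivial, $S^1_{k,m}(\G_0(t))$ is entirely new, and these are the trivial cases set aside in the statement; outside of them the construction above produces the required Jordan block.
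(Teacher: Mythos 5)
Your reduction to the oldspace founders on a characteristic-$p$ degeneration: you have imported the classical relations past the point where they survive. The identity $U_t\iota_t=\mathrm{id}$ fails here; as computed in \eqref{Eq2Utdelta}, $U_t(\delta_t\varphi)=q\,t^{2m-k}\varphi=0$ because $q\equiv 0$ in $\C_\infty$. Consequently $U_t$ acts on $W_g=\langle\delta_1 g,\delta_t g\rangle$ by $\bigl(\begin{smallmatrix}a&0\\-1&0\end{smallmatrix}\bigr)$, with characteristic polynomial $X^2-aX$ and \emph{no} constant term: the classical $p^{k-1}$ dies modulo $p$ (Remark \ref{RemOld}). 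So the only way the old part can fail to be semisimple is $a=0$, i.e.\ $\ker\T_t\neq 0$ at level one --- exactly the step you defer as ``the real work.'' The evidence assembled in the paper (see the discussion following Remark \ref{RemOld} and Table 3) is that $0$ never occurs as a $\T_t$-eigenvalue on $S^1_{k,m}(GL_2(A))$, so this route is very unlikely to close; the ``nilpotency-type behaviour of $T_t$ at level one'' you invoke is not a known result in this setting. Worse, there are many weights with $k$ odd and $q$ even for which $S^1_{k,m}(\G_0(1))=0$ while $S^1_{k,m}(\G_0(t))\neq 0$ (the antidiagonal blocks of Section \ref{SecAntidiagonalNewforms} are exactly of this kind); these are not excluded by the statement of the theorem, and an argument confined to the oldspace proves nothing about them.

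The obstruction actually lives in the new part, which you set aside for a mistaken reason. On the Fricke- and $U_t$-stable newspace $Ker(Tr)\cap Ker(Tr')$ one has $U_t^2=t^{2m-k}$ (equation \eqref{EqNewEig}), and for $k$ odd the exponent $2m-k$ is odd, so $t^{2m-k}$ is \emph{not} a square in $F=\F_q(t)$: the field $\F_q(t)$ is not perfect, and in characteristic $2$ the polynomial $X^2-t^{2m-k}$ is irreducible and inseparable over $F$. One does not need the newspace to be spanned by eigenforms: since $\C_\infty$ is algebraically closed, $U_t$ restricted to a nonzero newspace has some eigenvector, its eigenvalue is a root of $X^2-t^{2m-k}$, hence the characteristic polynomial of $U_t$ (which has coefficients in $F$) acquires an inseparable root and Theorem \ref{ThmMinPol} concludes. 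A Grassmann count shows the newspace is nonzero whenever $\dim S^1_{k,m}(\G_0(t))>2\dim S^1_{k,m}(\G_0(1))$, and the one residual case ($\beta=2\alpha$ with trivial newspace) is disposed of by Theorem \ref{ThmNonDiag2}. Your proposal contains neither the inseparability mechanism nor a substitute for it, so as written it does not prove the theorem.
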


\noindent The proof is based on the presence of an inseparable eigenvalue (see Sections \ref{SecSlopesNew} and \ref{SecSlopInsep})
and on Theorem \ref{ThmNonDiag2} (which holds for any characteristic \footnote{But we believe its hypotheses are very unlikely
to hold, see Remark \ref{RemNonDiag2}.}) for the remaining case.

\noindent In Section \ref{SecGamma1} we move to $\G=\G_1(t)$ and compute the matrix associated with
$U_t$ with respect to the basis $\mathcal{B}^1_k(\G_1(t)):=\{\c_j(\e)\,,\,0\leqslant j\leqslant k-2\}$
of $S^1_k(\G_1(t))$ (see Section \ref{UBasis}), where $\c_j$ are harmonic cocycles and $\e$ is a particular edge
of the fundamental domain. The crucial formula is
\begin{align}\label{Ucj}
U_t(\c_j(\e)) & = -(-t)^{j+1} \binom{k-2-j}{j} \c_j(\e) -t^{j+1}\sum_{h\neq 0}\left[ \binom{k-2-j-h(q-1)}{-h(q-1)} \right.\\
\ &\left. + (-1)^{j+1} \binom{k-2-j-h(q-1)}{j} \right] \c_{j+h(q-1)}(\e) \,. \nonumber
\end{align}
As it is easy to see from equation \eqref{Ucj}, the $\c_j$ can be divided into classes modulo $q-1$
and these classes are stable under the action of $U_t$. Thus, we can write the matrix associated with
$U_t$ in (at most) $q-1$ blocks and $U_t$ is diagonalizable if and only if each block is.
We shall denote by $C_j$ the class of $\c_j$, i.e.,  $C_j = \{\c_j, \c_{j+(q-1)}, \dots \}$ and by
$M_j$ the associated matrix. We detect the classes $C_j$ associated with $\G_0(t)$-invariant cuspforms
(see Section \ref{SecSpBlocks}) and then use formula \eqref{Ucj} to find coefficients for $M_j$. Those
matrices turn out to have certain symmetries which are summarized in Section \ref{SecSymmetry}.

In Section \ref{Char2Sec} we study the matrices $M_j$ associated to $S^1_{k,m}(\G_0(t))$ in some specific cases:
when the dimension of $M_j$ is relatively small with respect to $q$, the matrix has a form which allows us to decide about
its diagonalizability or, at least, to easily compute the slopes of its eigenforms. In particular, in the nontrivial
cases where the dimension of $M_j$ is at least 2, Theorems \ref{ThmAntidiagonal} and \ref{Thmj=0} and
Section \ref{Sec=j+2&j+3} show that\begin{itemize}
\item if $\dim(M_j)\leqslant j+1$, then $M_j$ is antidiagonal and it is diagonalizable if and only if $q$ is odd;
\item if $j=0$ and $\dim(M_0)\leqslant q+2$, then $M_j$ is diagonalizable unless $q$ is even and $\dim(M_0)\geqslant 4$;
\item if $\dim(M_j)\leqslant 4$ and $q$ is odd, then $Mj$ is diagonalizable.
\end{itemize}

As $\dim(M_j)$ grows it becomes harder to find a pattern for the matrices $M_j$, so we continued our investigation via some
computer search. In Section \ref{SecTables} we provide the links to the files containing the outcome of our computations
on characteristic polynomials and slopes for the action of $U_t$ on $S^1_{k,m}(\G_0(t))$ (still seen as a
subspace of $S^1_k(\G_1(t))\,$) and some speculations on what might be worth of more investigation in the future.\bigskip

\noindent{\bf Acknowledgements.} The first version of this paper included many more computations (and, unfortunately,
much less insight): we are extremely grateful to Gebhard B\"ockle for pointing out the possibility
of pursuing the path of the Gouv\^ea-Mazur papers in our setting and for useful conversations. 
We also thank Rudolph Perkins for helpful suggestions and for pointing out the paper \cite{Vi}.
Deep thanks are due to Kevin Buzzard for providing us with enlightening hints and ideas which have been crucial
for the development of the whole paper.

\section{Setting and notations}\label{SecNotations}

Here we set the notations and collect all basic facts and technical tools we shall need throughout the paper.

Let $F$ be the global function field $F:=\F_q(t)$, where $q$ is a power of a fixed prime $p\in\Z$, and denote by
$A:=\F_q[t]$ its ring of integers. Let $F_\infty$ be the completion of $F$ at $\pi:=\frac{1}{t}$ with ring of integers $A_\infty$
and denote by $\C_\infty$ the completion of an algebraic closure of $F_\infty$.\\
The {\em Drinfeld upper half-plane} is the set $\O:=\P^1(\C_\infty) - \P^1(F_\infty)$
together with a structure of rigid analytic space (see \cite{FvdP}).

\subsection{The Bruhat-Tits tree}\label{SecTree}
The Drinfeld's upper half plane has a combinatorial counterpart, the {\em Bruhat-Tits tree} $\Tr$ of $GL_2(F_\infty)$, which
we shall describe briefly here. For more details the reader is referred to \cite{G3}, \cite{G4} and \cite{S1}.\\
The tree $\Tr$ is a $(q+1)$-regular tree on which $GL_2(F_\infty)$ acts transitively. Let us denote by $Z(F_\infty)$ the scalar
matrices of $GL_2(F_\infty)$ and by $\I(F_\infty)$ the {\em Iwahori subgroup}, i.e.,
\[ \I(F_\infty)=\left\{\matrix{a}{b}{c}{d} \in GL_2(A_\infty)\, : \, c\equiv 0 \pmod \pi \right\}\,. \]
Then the sets $X(\Tr)$ of vertices and $Y(\Tr)$ of oriented edges of $\Tr$ are given by
\begin{align*}
X(\Tr) & = GL_2(F_\infty)/Z(F_\infty)GL_2(A_\infty)\\
Y(\Tr) & = GL_2(F_\infty)/ Z(F_\infty)\I(F_\infty).
\end{align*}
The canonical map from $Y(\Tr)$ to $X(\Tr)$ associates with each oriented edge $e$ its origin $o(e)$
(the corresponding terminus will be denoted by $t(e)\,$). The edge $\overline{e}$ is $e$ with
reversed orientation.\\
A system of representatives of $X(\Tr)$ and $Y(\Tr)$ is
\begin{align*}
S_X & := \left\{ v_{i,u}=\matrix {\pi^{-i}}{u}{0}{1} \, :\, i\in\Z\,,\ u\in F_\infty/\pi^{-i} A_\infty \right\} \\
S_Y & := S_X \bigcup S_X\matrix {0}{1}{\pi}{0}\, .
\end{align*} Two infinite paths in $\Tr$ are considered equivalent if they differ at finitely many edges. An {\em end} is an equivalence
class of infinite paths. There is a $GL_2(F_\infty)$-equivariant bijection between the ends of $\Tr$ and $\P^1(F_\infty)$.
An end is called {\em rational} if it corresponds to an element in $\P^1(F)$ under the above bijection.
Moreover, for any arithmetic subgroup $\G$ of $GL_2(A)$, the elements of $\G\backslash \P^1(F)$ are in bijection with
the ends of $\G\backslash \Tr$ (see \cite[Proposition 3.19]{B} and \cite[Lecture 7, Proposition 3.2]{GPRV})
and they are called the {\em cusps} of $\G$.\\
Following Serre \cite[pag 132]{S1}, we call a vertex or an edge {\em $\G$-stable} if its stabilizer in $\G$ is trivial
and {\em $\G$-unstable} otherwise.

\subsection{Drinfeld modular forms}\label{SecDrinfModForms}
The group $GL_2(F_\infty)$ acts on $\Omega$ by fractional linear transformation
\[  \left( \begin{array}{cc}
a & b  \\
c & d
\end{array} \right)(z)= \frac{az+b}{cz+d}\,. \]
Let $\G$ be an arithmetic subgroup of $GL_2(A)$. It has finitely many cusps, represented by $\G\backslash \P^1(F)$
as we have seen above. For $\g =\bigl(\begin{smallmatrix}a&b\\c&d\end{smallmatrix}\bigr)\in GL_2(F_\infty)$, $k,m \in \Z$
and $\varphi:\O\to \C_\infty$, we define
\begin{align*}   (\varphi \mf \g)(z) := \varphi(\g z)(\det \g)^m(cz+d)^{-k}. \end{align*}
A rigid analytic function $\varphi:\O\to \C_\infty$ is called a {\em Drinfeld modular function of weight $k$ and type $m$ for $\G$} if
\begin{equation}\label{Mod} (\varphi \mf \g )(z) =\varphi(z)\ \ \forall \g\in\G\,.  \end{equation}

\begin{defin}
A Drinfeld modular function $\varphi$ of weight $k\geqslant 0$ and type $m$ for $\G$ is called a {\em Drinfeld modular form} if
$\varphi$ is holomorphic at all cusps.\\
A Drinfeld modular form $\varphi$ is called a {\em cusp form}(resp. {\em double cusp form}) if  it vanishes at all
cusps to the order at least 1 (resp. to the order at least 2).\\
The space of Drinfeld modular forms
of weight $k$ and type $m$ for $\G$ will be denoted by $M_{k,m}(\G)$. The subspace of cuspidal modular forms (resp. doubly
cuspidal) is denoted by $S^1_{k,m}(\G)$ (resp. $S^2_{k,m}(\G))$.
\end{defin}

\noindent Note that weight and type are not independent of each other. Taking the matrix
$\g=\bigl(\begin{smallmatrix}a&0\\0&a\end{smallmatrix}\bigr)\in\G$
for some $a\in\F_q^*$ of multiplicative order $\ell$ and substituting it in equation \eqref{Mod}, one sees that,
if $k\not\equiv 2m \pmod \ell$, then $M_{k,m}(\G)=0$.\\
Moreover, if $\G$ is such that all its elements have determinant equal to one it follows that
the type does not play a role. If this is the case, for fixed $k$ all $M_{k,m}(\G)$ are isomorphic (the same holds for
$S^1_{k,m}(\G)$ and $S^2_{k,m}(\G)$) and we will simply denote them by $M_{k}(\G)$ (resp. $S^1_{k}(\G)$ and $S^2_{k}(\G)$).

\noindent All $M_k(\G)$ and $S^i_k(\G)$ $i=1,2$ are finite dimensional $\C_\infty$-vector spaces. In particular
when $\G$ has no prime to $p$ torsion, we have the following

\begin{thm}\label{DimTorFree}
Let $g(\G)$ denote the genus of $\G\backslash \Tr$ and $h(\G)$ the number of its cusps. If $\G$ has no prime to $p$ torsion
 and $\det(\G)=1$, then
\begin{itemize}
\item[{\bf 1.}] {$\dim_{\C_\infty} S^1_k(\G)=(k-1)(g(\G)+h(\G)-1)$;}
\item[{\bf 2.}] {$\dim_{\C_\infty} S^2_k(\G)= \left\{ \begin{array}{cr}
g(\G) & k=2 \\
\ & \\
(k-2)(g(\G)+h(\G)-1)+g(\G)-1 & k>2
\end{array}\right.$}
\end{itemize}
\end{thm}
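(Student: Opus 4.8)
This dimension formula is classical (see \cite[Lecture~7]{GPRV} and \cite{B}) and the argument runs through the harmonic cocycle description of Section~\ref{SecIsomModFrmHarCoc}, which I will take for granted. Since $\det(\G)=1$ the type is irrelevant and $S^1_k(\G)\simeq C^{har}_k(\G)$, the space of $\G$-equivariant harmonic cocycles $c\colon Y(\Tr)\to V$ with values in the $(k-1)$-dimensional module $V$ of polynomials of degree $\leqslant k-2$, on which $\bigl(\begin{smallmatrix}1&b\\0&1\end{smallmatrix}\bigr)$ acts by $P(X)\mapsto P(X+b)$; so the plan is to compute $\dim_{\C_\infty}C^{har}_k(\G)$ directly from the quotient graph. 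First I would recall the reduction theory for $\G\backslash\Tr$ (\cite[p.~132]{S1}, \cite{GN}, \cite[Lecture~7]{GPRV}): since $\G$ has no prime-to-$p$ torsion, every nontrivial stabilizer is a finite unipotent $p$-group whose fixed locus in $\Tr$ is an infinite half-line, so the $\G$-unstable vertices and edges form, in the quotient, $h=h(\G)$ disjoint rays --- one for each cusp, with stabilizers growing by a factor $q$ along each ray --- while their complement $\G\backslash\Tr^{(0)}$ is a finite connected graph, with $V_0$ vertices, $E_0$ edges and first Betti number $g=g(\G)$, all of whose vertices and edges are $\G$-stable.

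Next I would follow a cocycle along a ray with vertices $w_0\in\G\backslash\Tr^{(0)}, w_1, w_2, \dots$ and edges $e_i=(w_i,w_{i+1})$: harmonicity at $w_i$ together with $\G$-equivariance expresses $c(e_i)$ as the image of $c(e_{i-1})$ under the operator on $V$ obtained by summing the $q$ new $\bigl(\begin{smallmatrix}1&\ast\\0&1\end{smallmatrix}\bigr)$-translates introduced at level $i$, and since $\sum_{\lambda\in\F_q}\lambda^j$ is $-1$ when $(q-1)\mid j$, $j\geqslant 1$, and $0$ otherwise, this operator lowers polynomial degree by at least $q-1$. Hence $\deg c(e_i)\leqslant k-2-i(q-1)$, so $c$ vanishes on the ray past level $M$ as soon as $M>(k-2)/(q-1)$. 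Truncating all rays at such an $M$ loses nothing, and $C^{har}_k(\G)$ is identified with the space of $V$-valued functions on the oriented edges of the resulting finite graph $G_M$ that are alternating and obey the harmonicity relation at every vertex \emph{except} the $h$ new terminal vertices (where that relation holds automatically, for degree reasons). Every edge of $G_M$ is $\G$-stable, hence carries a full copy of $V$, and a short incidence-matrix argument --- using that $G_M$ is connected, so that every connected component of the subgraph on its non-terminal vertices touches a terminal vertex --- shows these relations are linearly independent. Consequently $\dim_{\C_\infty}S^1_k(\G)$ equals $(k-1)$ times the number of edges of $G_M$ minus $(k-1)$ times the number of its non-terminal vertices; the terms depending on $M$ cancel, leaving $(k-1)(E_0-V_0+h)$, and since $E_0-V_0+1=g$ this is $(k-1)(g+h-1)$, which proves {\bf 1.}

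For {\bf 2.} I would realise $S^2_k(\G)$ as the kernel of the linear map $S^1_k(\G)\to\C_\infty^{\,h}$ recording, at each cusp, the first nonconstant coefficient of the $u$-expansion --- equivalently, a fixed linear functional of the ray datum $c(e_0)$ there, hence of the core cocycle. The main obstacle is to determine the rank of this map. For $k>2$ it is surjective: the $h$ functionals are independent, which can be seen by exhibiting, for each cusp, a form with prescribed first coefficient there and vanishing first coefficients at the others (via Poincar\'e- or Eisenstein-type series, or directly on $\Tr$); then $\dim S^2_k(\G)=\dim S^1_k(\G)-h=(k-2)(g+h-1)+g-1$. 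For $k=2$ the module $V$ is trivial and the $h$ functionals satisfy exactly one relation --- the vanishing of the sum of the residues on $\G\backslash\Tr$, i.e.\ the weight-$2$ residue theorem --- so the rank drops to $h-1$ and $\dim S^2_2(\G)=\dim S^1_2(\G)-(h-1)=g$. As a cross-check, both formulas also follow from Riemann--Roch on the smooth compactification $\overline{M}_\G$ of the Drinfeld modular curve, whose genus equals $g(\G)$, once the degree of the relevant Hodge-type line bundle is known.
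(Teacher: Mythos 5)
The paper offers no argument at all for this statement: its ``proof'' is the citation to \cite[Propositions 5.4 and 5.18]{B}, so any self-contained derivation is necessarily doing more than the paper does. Your overall strategy --- pass to harmonic cocycles, decompose $\G\backslash\Tr$ into a finite part plus $h$ cusp rays, use the degree-drop by $q-1$ along each ray to truncate, and then count by an Euler-characteristic argument --- is indeed the route taken in the cited references, and the ray recursion you describe is exactly the computation that reappears in Section~\ref{SecGamma1} of the paper.

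There is, however, a genuine gap at the load-bearing step of the count for {\bf 1.} You assert that ``every edge of $G_M$ is $\G$-stable, hence carries a full copy of $V$.'' This is false: the edges along the cusp rays are precisely the \emph{unstable} ones, with unipotent stabilizers growing by a factor of $q$ at each step. In the paper's own main example $\G=\G_1(t)$, the quotient is a bi-infinite path in which \emph{every} vertex is unstable and every edge except one is unstable, so your hypothesis fails for all but one edge of $G_M$. By $\G$-equivariance the value of a cocycle on an unstable edge $e$ is confined to the $\mathrm{Stab}_\G(e)$-invariants of $\Hom(V(k,m),\C_\infty)$, a proper subspace, and the harmonicity relation at an unstable vertex, written on the quotient, is an orbit-sum relation whose rank is correspondingly smaller than $k-1$; your ``$(k-1)\cdot\#\{\mathrm{edges}\}-(k-1)\cdot\#\{\mathrm{non\mbox{-}terminal\ vertices}\}$'' count happens to give the right number only because the deficit in the invariant subspace at $e_i$ matches the deficit in the rank of the relation at $w_i$, so the corrections telescope to zero along each ray --- but that cancellation is the actual content of the proof and your argument does not establish it. A correct version must track $\dim(V^*)^{\mathrm{Stab}(e)}$ edge by edge (equivalently, invoke \cite[Lemma 20]{T} that cocycles are determined by their values on the stable edges). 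Separately, in {\bf 2.} the surjectivity for $k>2$ of the ``leading coefficient at each cusp'' map is asserted rather than proved; exhibiting, for each cusp, a cusp form nonvanishing to order exactly one there and doubly vanishing at the others is the substantive point, and the parenthetical ``via Poincar\'e- or Eisenstein-type series'' does not discharge it.
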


\begin{proof}
See \cite[Proposition 5.4]{B} and \cite[Proposition 5.18]{B}.
\end{proof}
For an extensive calculation of the genus and the number of cusps for any $\G$ see \cite{GN}.

\subsection{Harmonic cocycles}\label{SecHarCoc}
For $k\geqslant 0$ and $m\in\Z$, let $V(k,m)$ be the $(k-1)$-dimensional vector space over $\C_\infty$ with basis
$\{X^jY^{k-2-j}: 0\leqslant j\leqslant k-2 \}$. The action of $\g=\matrix{a}{b}{c}{d} \in GL_2(F_\infty)$ on $V(k,m)$ is given by
\[ \g(X^jY^{k-2-j}) = \det(\g)^{m-1}(dX-bY)^j(-cX+aY)^{k-2-j}\quad {\rm for}\ 0\leqslant j\leqslant k-2\,.\]
For every $\o\in \Hom(V(k,m),\C_\infty)$ we have an induced action of $GL_2(F_\infty)$
\[ (\g\o)(X^jY^{k-2-j})=\det(\g)^{1-m}\o((aX+bY)^j(cX+dY)^{k-2-j})\quad {\rm for}\ 0\leqslant j\leqslant k-2\,. \]
\begin{defin}
A {\em harmonic cocycle of weight $k$ and type $m$ for $\G$} is a function $\c$ from the set of directed edges
of $\Tr$ to $\Hom(V(k,m),\C_\infty)$ satisfying:
\begin{itemize}
\item[{\bf 1.}] ({\em harmonicity}) for all vertices $v$ of $\Tr$,
\[  \sum_{t(e)= v}\c(e)=0 \]
where $e$ runs over all edges in $\Tr$ with terminal vertex $v$;
\item[{\bf 2.}] ({\em antisymmetry}) for all edges $e$ of $\Tr$, $\c(\overline{e})=-\c(e)$;
\item[{\bf 3.}] ({\em $\G$-equivariancy}) for all edges $e$ and elements $\g\in\G$, $\c(\g e)=\g(\c(e))$.
\end{itemize}
\end{defin}

\noindent The space of harmonic cocycles of weight $k$ and type $m$ for $\G$ will be denoted by $C^{har}_{k,m}(\G)$.\\
We point out that, as a consequence of \cite[Lemma 20]{T}, cocycles in $C^{har}_{k,m}(\G)$ are determined by their
values on the stable (non-oriented) edges of a fundamental domain.

\subsubsection{Cusp forms and harmonic cocycles}\label{SecIsomModFrmHarCoc}
In \cite{T}, Teitelbaum constructed the so-called ``residue map'' which allow us to us to interpret cusp forms as harmonic cocycles.
Indeed, it is proved in \cite[Theorem 16]{T} that this map is actually an isomorphism
\[  S^1_{k,m}(\G)\simeq C^{har}_{k,m}(\G)\,. \]
For more details on the subject the reader is referred to the original paper of Teitelbaum or to \cite[Section 5.2]{B} which
is full of details written in a more modern language.

\subsection{Hecke operators}\label{SecHecke}
Hecke operators on Drinfeld modular forms are formally defined using a double coset decomposition as showed in
\cite{A}. Here we will use an equivalent definition employing a simplified notations which will involve just
polynomials of $A$.\\
Let $\n$ be an ideal of $A$ and denote by $P_\n$ its monic generator. The {\em Hecke operator} $\T_\n$ acts
on $\varphi\in M_{k,m}(\G)$ in the following way
\[ \T_\n(\varphi)(z):=\sum_{\begin{subarray}{c} \alpha,\delta\ \mathrm{monic}\\
\beta\in A, \deg(\beta)<\deg(\delta)\\
\alpha\delta=P_\n, (\alpha)+(t)=A\end{subarray}}
( \varphi \mf \matrix{\alpha}{\beta}{0}{\delta} )(z) \,.\]
Note that when $\n$ is a prime ideal different from $(t)$ and such that $P_\n$ has degree one we find the usual definition
\[ \T_\n(\varphi)(z)=  ( \varphi \mf \matrix{P_\n}{0}{0}{1} ) (z) +
\sum_{\beta\in \F_q}  ( \varphi \mf \matrix{1}{\beta}{0}{P_\n} ) (z)\,. \]
In particular, for $\n=(t)$ the Atkin $U_t$-operator in our context is
\[ U_t(\varphi)(z):=\T_{(t)}(\varphi)(z)= t^{m-k} \sum_{\beta\in \F_q} \varphi\left(\frac{z+\beta}{t}\right)\,. \]
We would like to point out that some authors just declared $U_t$ to be the zero map (see, e.g., B\"ockle \cite[Definition 6.5]{B}),
while others (like Armana \cite{A} or Goss \cite{Go}) included a not trivial $U_t$-operator in the Hecke algebra
they worked with.

The residue map allows us to define a Hecke action on harmonic cocycles in the following way:
\begin{align*}
U_t(\c(e))=  \sum_{\beta\in \F_q}  \left(\begin{array}{cc}
1 & \beta  \\
0 & t
\end{array}\right)^{-1}\c\left( \left(\begin{array}{cc}
1 & \beta  \\
0 & t
\end{array}\right)e\right)
\end{align*}
(for details see formula (17) in \cite[Section 5.2]{B}). Note that this corresponds to the $U_t(f)$ above, but
the definition we shall use in Section \ref{SecGamma1} differs by a factor $t^{k-m}$, which we include there to eliminate
any reference to the type $m$ in the final formulas (this will interfere a bit with the slopes but not with diagonalizability).

\noindent We shall focus on the congruence groups $\G_0(\mathfrak{m})$ and $\G_1(\mathfrak{m})$ (where $\mathfrak{m}$ is
an ideal of $A$ and, almost always, it will simply be $t$ or $1$), where
\[ \G_0(\mathfrak{m})=\left\{ \left( \begin{array}{cc}
a & b  \\
c & d
\end{array} \right)\in GL_2(A): c\equiv 0 \pmod{\mathfrak{m}} \right\}\]
and
\[ \G_1(\mathfrak{m})=\left\{ \left( \begin{array}{cc}
a & b  \\
c & d
\end{array} \right)\in GL_2(A): a\equiv d\equiv 1\ \mathrm{and}\ c\equiv 0 \pmod{\mathfrak{m}} \right\}.\]

The main strategy is the following. As showed by Teitelbaum (\cite[Theorem 3]{T}), every harmonic cocycle vanishes on all
but finitely many edges of the fundamental domain.
So, we need to evaluate a harmonic cocycle just on stable edges, as already noted at the end of Section \ref{SecHarCoc},
and then extend to other (non-vanishing) edges by $\G$-equivariancy and harmonicity. In this way we shall obtain
the action of $U_t$ and then we will proceed with analyzing its diagonalizability and/or the slopes of its eigenforms.

\section{Slopes of Hecke eigenforms for $S^1_{k,m}(\G_0(t))$} \label{SecSlopes}
We shall consider two related problems together: the diagonalizability of the matrix associated to the action of
the Hecke operator $U_t$ on $S^1_{k,m}(\G_0(t))$ and the slopes of its eigenforms defined as

\begin{defin}\label{DefSlope}
Let $\varphi$ be an eigenform for $U_t$ of eigenvalue $\lambda\neq 0$, then the {\em slope} of $\varphi$ is $v_t(\lambda)$
(where $v_t$ is the natural $t$-adic valuation with $v_t(t)=1$).
\end{defin}

Knowing the slopes usually does not provide full information on the diagonalizability, unless (as we shall see) we
have a slope which yields an inseparable eigenvalue for $U_t$ and readily implies non-diagonalizability.
We shall start with some information on the slopes and then move to the space $S^1_k(\G_1(t))$ where
we can provide more explicit formulas for the matrices and a few more results on diagonalizability.

\subsection{The degeneracy maps} For any ideal $\mathfrak{m}$, we have two obvious maps which produce
{\em oldforms} in $S^1_{k,m}(\G_0(\mathfrak{m}t))$ (i.e., forms whose level is $\mathfrak{m}$ but which can be considered
also in $S^1_{k,m}(\G_0(\mathfrak{m}t))\,$):
\[ \delta_1\,,\delta_t: S^1_{k,m}(\G_0(\mathfrak{m})) \rightarrow S^1_{k,m}(\G_0(\mathfrak{m}t)) \]
\[ \delta_1\varphi:=\varphi \]
\[ \delta_t\varphi:=(\varphi\mf \matrix{t}{0}{0}{1})(z)\ {\rm ,i.e.,}\ (\delta_t\varphi)(z)=t^m\varphi(tz)\,. \]

\begin{prop}\label{PropDeltaInj}
Assume $t$ and $\infty$ do not divide $\mathfrak{m}$, then the map
\[ \delta : S_{k,m}^1(\G_0(\mathfrak{m}))\times S_{k,m}^1(\G_0(\mathfrak{m})) \longrightarrow S_{k,m}^1(\G_0(\mathfrak{m}t)) \]
\[ \delta(\varphi,\psi):=\delta_1\varphi+\delta_t\psi \]
is injective.
\end{prop}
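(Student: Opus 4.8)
The plan is to argue by contradiction: suppose $\delta(\varphi,\psi)=\delta_1\varphi+\delta_t\psi=0$ with $(\varphi,\psi)\neq(0,0)$, and derive a contradiction by playing off the modular transformation behavior at the two primes $t$ and $\infty$. First I would observe that if $\psi=0$ then $\varphi=\delta_1\varphi=0$ trivially, so we may assume $\psi\neq 0$, and similarly $\varphi\neq 0$; the relation then reads $\varphi(z)=-t^m\psi(tz)$ as functions on $\O$. The key point is that $\varphi$ and $\psi$ are each modular for $\G_0(\mathfrak{m})$ with $\mathfrak{m}$ prime to $t$, so this identity transports the invariance of one side under a group element into an unexpected invariance of the other side.

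Concretely, I would conjugate by $\sigma=\matrix{t}{0}{0}{1}$. The identity $\delta_1\varphi = -\delta_t\psi$ says $\varphi = -(\psi\mf\sigma)$. Now pick $\g=\matrix{a}{b}{c}{d}\in\G_0(\mathfrak{m})$; applying $\mf\g$ to both sides and using $\varphi\mf\g=\varphi$ gives $(\psi\mf\sigma)\mf\g = \psi\mf\sigma$, i.e. $\psi$ is invariant under $\sigma\g\sigma^{-1}$ for all $\g\in\G_0(\mathfrak{m})$. Since $\psi$ is already invariant under $\G_0(\mathfrak{m})$, it is invariant under the group generated by $\G_0(\mathfrak{m})$ and $\sigma\G_0(\mathfrak{m})\sigma^{-1}$. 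One computes $\sigma\matrix{a}{b}{c}{d}\sigma^{-1}=\matrix{a}{tb}{c/t}{d}$, which lies in $GL_2(A)$ only when $t\mid c$; but $\G_0(\mathfrak{m})$ contains elements with $c$ a unit times an arbitrary residue mod $\mathfrak{m}$, so by strong approximation (using $t\nmid\mathfrak{m}$ and $\infty\nmid\mathfrak{m}$) the group generated is large — in fact it will contain matrices that force $\psi$ to have level with a $t$ in it, or more precisely it will be an arithmetic group whose quotient of $\Tr$ has strictly fewer cusps/smaller genus contributions, contradicting that $\psi$ is a nonzero cusp form (e.g. via Theorem \ref{DimTorFree} or directly because the enlarged group has no cusp forms of weight $k$ for dimension reasons, or because a nonzero form invariant under a group containing a hyperbolic element of infinite order forces vanishing). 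The cleanest incarnation: the two degeneracy images $\delta_1\varphi$ and $\delta_t\psi$ have, at the cusp $0$ of $\G_0(\mathfrak{m}t)$ (which lies above $\infty$ of $\G_0(\mathfrak{m})$ for $\delta_t$ but above $0$ for $\delta_1$), $u$-expansions supported on disjoint, or differently-scaled, sets of exponents, so their sum cannot vanish unless each does.

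I would prefer to phrase the final step via expansions at cusps, which is the standard proof in the number-field case. Since $t\nmid\mathfrak{m}$, the cusps $0$ and $\infty$ of $\G_0(\mathfrak{m})$ both split in $\G_0(\mathfrak{m}t)$, and the uniformizers scale by a power of $t$ under $\delta_t$ but not under $\delta_1$. Comparing the leading terms of the $u$-expansions of $\delta_1\varphi$ and $\delta_t\psi$ at a well-chosen cusp (say the cusp $\infty$, where $\delta_1\varphi$ keeps the expansion of $\varphi$ but $\delta_t\psi$ has the expansion of $\psi$ in the variable scaled by $t$, hence with a different valuation/normalization) shows that $\delta_1\varphi+\delta_t\psi=0$ forces the leading coefficient of $\varphi$ to vanish at that cusp and, running over all cusps, forces $\varphi=0$; then $\delta_t\psi=0$ and injectivity of $\delta_t$ (clear since $\psi\mapsto\psi\mf\sigma$ is invertible with inverse $\mf\sigma^{-1}$) gives $\psi=0$.

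The main obstacle I anticipate is the bookkeeping at the cusps: one must check that $t$ and $\infty$ being coprime to $\mathfrak{m}$ really does make the relevant cusps of $\G_0(\mathfrak{m})$ split into distinct cusps of $\G_0(\mathfrak{m}t)$ with the expected ramification, and that the scaling of the uniformizer $u$ under $\mf\sigma$ is by a genuine power of $t$ with positive valuation — this is what creates the mismatch of exponents that prevents cancellation. The function-field subtlety (as opposed to the classical case) is that expansions live in characteristic $p$, so one cannot divide by integers; but since the argument only uses \emph{vanishing} of leading coefficients and never divides, this causes no trouble. If the expansion-at-cusps argument turns out to be awkward to set up cleanly, the group-theoretic alternative via the Bruhat–Tits tree — showing the group generated by $\G_0(\mathfrak{m})$ and its $\sigma$-conjugate acts on $\Tr$ with a quotient admitting no nonzero weight-$k$ harmonic cocycles — is the fallback.
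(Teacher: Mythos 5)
Your opening reduction is exactly the paper's: from $\delta_1\varphi=-\delta_t\psi$ you deduce that the form is invariant under both $\G_0(\mathfrak{m})$ and a conjugate $\sigma^{\pm1}\G_0(\mathfrak{m})\sigma^{\mp1}$ with $\sigma=\matrix{t}{0}{0}{1}$. But from there the proposal branches into two endgames, and neither is actually carried out; the one you say you prefer has a genuine gap. The expansion-at-cusps argument does not work as stated: the $u$-expansions of $\delta_1\varphi$ and $\delta_t\psi$ at a cusp are \emph{not} supported on disjoint sets of exponents. Under $z\mapsto tz$ the uniformizer transforms via the Carlitz module, $1/u(tz)=C_t(1/u(z))$, so $u(tz)=u^q\cdot(\text{unit in }\C_\infty[[u]])$; hence $\delta_t\psi$ is supported (roughly) on exponents divisible by $q$ while $\delta_1\varphi$ is supported on all exponents, and they overlap. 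Equating coefficients only yields that the coefficients of $\varphi$ vanish in the degrees not hit by $\delta_t\psi$, i.e.\ that $\varphi$ has a lacunary expansion — exactly as in the classical case, where concluding $\varphi=0$ from this requires the nontrivial Atkin--Lehner ``main lemma,'' which is precisely what is unavailable (and is the point of the proposition) in the Drinfeld setting. So ``their sum cannot vanish unless each does'' is not justified.

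Your group-theoretic fallback is the right route — it is the paper's route — but the decisive step is missing. ``Strong approximation makes the generated group large'' is not enough: you need to identify the group generated by $\G_0(\mathfrak{m})$ and $\matrix{t}{0}{0}{1}^{-1}\G_0(\mathfrak{m})\matrix{t}{0}{0}{1}$ as the $S$-arithmetic group $G(\mathfrak{m})\subset GL_2(A_S)$, $S=\{t,\infty\}$. The paper gets this from Serre's amalgam theorem applied to the action of $G(\mathfrak{m})$ on the \emph{second} Bruhat--Tits tree $\mathcal{T}_t$ at the place $t$ (the two conjugate copies of $\G_0(\mathfrak{m})$ are the stabilizers of the two endpoints of a fundamental edge of $\mathcal{T}_t$, and $\G_0(\mathfrak{m}t)$ is the edge stabilizer), together with the Hauer--Longhi result that $G(\mathfrak{m})$ acts transitively on the edges of $\mathcal{T}$; a nonzero cusp form cannot be invariant under a group acting transitively on $\mathcal{T}$, whence $\varphi=0$ and then $\psi=0$. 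Without introducing the tree at $t$ and the amalgam structure, the claim that the generated group forces vanishing remains an assertion, not a proof.
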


\begin{proof}
Let $(\varphi, \psi)\in Ker(\delta)$, then
\[ \delta_1\varphi=-\delta_t\psi=-(\psi\mf\matrix{t}{0}{0}{1}) \,.\]
Therefore, for any $\gamma\in \Gamma_0(\mathfrak{m})$,
\begin{align}
(\delta_1\varphi\mf\matrix{t}{0}{0}{1}^{-1}\gamma \matrix{t}{0}{0}{1}) & =
-(\delta_t\psi\mf\matrix{t}{0}{0}{1}^{-1}\gamma \matrix{t}{0}{0}{1}) \nonumber \\
\ & = -(\psi\mf\gamma \matrix{t}{0}{0}{1}) = - (\psi\mf\matrix{t}{0}{0}{1}) \\
\ & =-\delta_t\psi=\delta_1\varphi \nonumber
\end{align}
i.e., $\varphi$ is invariant for $\matrix{t}{0}{0}{1}^{-1}\Gamma_0(\mathfrak{m}) \matrix{t}{0}{0}{1}$ as well.

\noindent We have been working on $\mathcal{T}$, the {\em Bruhat-Tits tree at $\infty$}, now we consider also $\mathcal{T}_t$
the {\em Bruhat-Tits tree at $t$} associated with $GL_2(F_t)$ ($F_t$ is the completion of $F$
at the prime $t$). Let $S:=\{\infty,t\}$ and put $A_S$ as the ring of elements regular outside $S$. Define
\[ G(\mathfrak{m}):=\left\{ \gamma=\matrix{a}{b}{c}{d}\in GL_2(A_S)\,:
\,\det(\gamma)\in \F_q^*\ {\rm and}\ c\equiv 0\pmod{\mathfrak{m}}\right\} \]
(see \cite[Section 2.1]{HL}).
By \cite[Corollary 2.2]{HL} $G(\mathfrak{m})$ acts transitively on both $\mathcal{T}$ and $\mathcal{T}_t$, hence
there is a single fundamental edge for the action of $G(\mathfrak{m})$.

\noindent Now consider a fundamental edge $e_t$ for $\mathcal{T}_t$ with terminus $w_t$ and origin
$v_t=\matrix{t}{0}{0}{1}^{-1}w_t$.
As remarked in \cite{HL} (first paragraph of page 162) the stabilizers of $e_t$, $w_t$ and $v_t$ in $G(\mathfrak{m})$ are
respectively
\[ Stab_{G(\mathfrak{m})}(e_t)=\Gamma_0(\mathfrak{m}t) \]
(denoted $\Gamma_0^\infty(\mathcal{m}t)$ in \cite{HL})
\[ Stab_{G(\mathfrak{m})}(w_t)=\Gamma_0(\mathfrak{m}) \]
\[ Stab_{G(\mathfrak{m})}(v_t)=\matrix{t}{0}{0}{1}^{-1}\Gamma_0(\mathfrak{m})\matrix{t}{0}{0}{1} \,.\]
Therefore \cite[Theorem 6, page 32]{S1} yields
\[ G(\mathfrak{m})= \Gamma_0(\mathfrak{m})*_{\Gamma_0(\mathfrak{m}t)}
\matrix{t}{0}{0}{1}^{-1}\Gamma_0(\mathfrak{m})\matrix{t}{0}{0}{1} \]
($*$ here is the {\em amalgamated product}).
\noindent Finally we have seen that $\varphi\in Ker(\delta)$ implies $\varphi$ is $\Gamma_0(\mathfrak{m})$ and
$\matrix{t}{0}{0}{1}^{-1}\Gamma_0(\mathfrak{m})\matrix{t}{0}{0}{1}$ invariant, so it is $G(\mathfrak{m})$ invariant.
Since $G(\mathfrak{m})$ acts transitively on $\mathcal{T}$, then $\varphi=0$ (and, consequently, $\psi=0$ as well).
\end{proof}

\subsection{Hecke operators} We recall the definition of Hecke operators, in particular the one associated to $t$ and
note that the definition depends on the level $\mathfrak{m}$ of the form we are considering (in particular on the
divisibility of $\mathfrak{m}$ by $t$). Let $\varphi\in S_{k,m}^1(\G_0(\mathfrak{m}))$, then
\[ U_t\varphi(z):=\left\{\begin{array}{ll}
\displaystyle{(\varphi\mf\matrix{t}{0}{0}{1})(z)+
\sum_{b\in\F_q} (\varphi\mf\matrix{1}{b}{0}{t})(z) } & {\rm if}\ t\nmid \mathfrak{m}  \\
\ & \\
\displaystyle{ \sum_{b\in\F_q} (\varphi\mf\matrix{1}{b}{0}{t})(z) } & {\rm if}\ t\mid \mathfrak{m}  \end{array} \right. \ .\]
Since our maps will move back and forth from level $\mathfrak{m}$ to level $\mathfrak{m}t$ (and often $\mathfrak{m}$
will simply be 1) we use different notations $\T_t$ for the Hecke operator of level $\mathfrak{m}$ (assuming $t\nmid \mathfrak{m}$)
and $U_t$ for the Hecke operator of level $\mathfrak{m}t$.

\subsection{Slopes of oldforms}\label{SecSlopesOld}
We now check the interaction between the degeneracy maps and the Hecke operator. Let $\varphi\in S_{k,m}^1(\G_0(\mathfrak{m}))$, then
(directly from the definition) one has
\begin{align*} \T_t\varphi & =(\varphi\mf\matrix{t}{0}{0}{1})+U_t(\delta_1\varphi) \\
\ & = \delta_t\varphi +U_t(\delta_1\varphi) \,,\end{align*}
hence the formula
\begin{equation}\label{Eq1Utdelta}
U_t(\delta_1\varphi)=\delta_1(\T_t\varphi)-\delta_t\varphi\,.
\end{equation}
For the other degeneracy map one has
\begin{align} \label{Eq2Utdelta}
U_t(\delta_t\varphi) & = \sum_{b\in\F_q} (\delta_t\varphi\mf\matrix{1}{b}{0}{t}) =
\sum_{b\in\F_q} (\varphi\mf\matrix{t}{0}{0}{1}\matrix{1}{b}{0}{t}) \nonumber \\
\ & = \sum_{b\in\F_q} (\varphi\mf\matrix{t}{tb}{0}{t}) = \sum_{b\in\F_q} (\varphi\mf\matrix{1}{b}{0}{1}\matrix{t}{0}{0}{t}) \\
\ & = t^{2m-k} \sum_{b\in\F_q} (\varphi\mf\matrix{1}{b}{0}{1}) =
t^{2m-k}\sum_{b\in\F_q} \varphi = qt^{2m-k} \varphi = 0 \,.\nonumber
\end{align}
In particular $U_t$ always has nontrivial kernel (hence nontrivial eigenspace for the eigenvalue 0),
because $U_t(Im(\delta_t))=0$.

\begin{prop}\label{PropOldSlopes}
We have an equality of sets
\[ \{ {\rm Eigenvalues\ of\ }{U_t}_{|Im(\delta)}\} = \{ {\rm Eigenvalues\ of\ }\T_t\}\cup\{0\}\,,\]
i.e, in terms of slopes
\[ \{ {\rm Slopes\ for\ }{U_t}_{|Im(\delta)}\} = \{ {\rm Slopes\ for\ }\T_t\} \,.\]
\end{prop}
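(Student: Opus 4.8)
The plan is to use the injectivity of $\delta$ (Proposition \ref{PropDeltaInj}) together with the two identities \eqref{Eq1Utdelta} and \eqref{Eq2Utdelta} to put $U_t$ in block triangular form on $Im(\delta)$, and then read off the eigenvalues and the slopes.

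Write $S:=S^1_{k,m}(\G_0(\mathfrak{m}))$. Since $\delta$ is injective, so are $\delta_1$ and $\delta_t$, and moreover $Im(\delta_1)\cap Im(\delta_t)=0$: indeed $\delta_1\varphi=\delta_t\psi$ forces $\delta(\varphi,-\psi)=0$, hence $\varphi=\psi=0$. Thus $Im(\delta)=Im(\delta_1)\oplus Im(\delta_t)$ is an internal direct sum, with $\delta_1$ (resp. $\delta_t$) an isomorphism of $S$ onto the first (resp. second) summand. I would also note that $Im(\delta)$ is $U_t$-stable: combining \eqref{Eq1Utdelta} and \eqref{Eq2Utdelta},
\[ U_t(\delta_1\varphi+\delta_t\psi)=\delta_1(\T_t\varphi)-\delta_t\varphi\in Im(\delta_1)\oplus Im(\delta_t), \]
where we use that $\T_t$ is an operator on the level-$\mathfrak{m}$ space $S$.

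I would then observe that the displayed identity says exactly that, with respect to the ordered decomposition $Im(\delta)=Im(\delta_1)\oplus Im(\delta_t)$, the operator $U_t|_{Im(\delta)}$ is block lower triangular; transporting through the isomorphisms $\delta_1,\delta_t$ it becomes $(\varphi,\psi)\mapsto(\T_t\varphi,-\varphi)$, that is, the block matrix $\matrix{\T_t}{0}{-I}{0}$. Its characteristic polynomial is then the product of the characteristic polynomial of $\T_t$ with $x^{\dim_{\C_\infty}S}$, so
\[ \{\text{eigenvalues of }U_t|_{Im(\delta)}\}=\{\text{eigenvalues of }\T_t\}\cup\{0\}, \]
which is the first assertion. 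The eigenvalue $0$ genuinely occurs — in agreement with the remark following \eqref{Eq2Utdelta} — because the nonzero space $Im(\delta_t)$ lies in $Ker(U_t)$.

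Finally, for the slope statement, recall that by Definition \ref{DefSlope} a slope is recorded only for a nonzero eigenvalue; from the block triangular shape the set of nonzero eigenvalues of $U_t|_{Im(\delta)}$ coincides with the set of nonzero eigenvalues of $\T_t$, and applying $v_t$ yields the equality of slope sets. There is no real obstacle in this argument: the only points needing care are that the injectivity of $\delta$ is precisely what makes $Im(\delta_1)\oplus Im(\delta_t)$ a direct sum (so that the block triangular picture is legitimate), and that $0$ must be adjoined to the eigenvalue set but disappears once one passes to slopes; the substance of the proposition already lies in Proposition \ref{PropDeltaInj} and in formulas \eqref{Eq1Utdelta}--\eqref{Eq2Utdelta}.
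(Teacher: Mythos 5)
Your proof is correct, and it rests on exactly the same two inputs as the paper's argument --- the injectivity of $\delta$ from Proposition \ref{PropDeltaInj} and the formulas \eqref{Eq1Utdelta} and \eqref{Eq2Utdelta} --- but it packages them differently. The paper argues eigenvector by eigenvector: given $U_t(\delta(\varphi,\psi))=\lambda\delta(\varphi,\psi)$ with $\lambda\neq 0$, it rewrites the left-hand side as $\delta(\T_t\varphi,-\varphi)$ and uses injectivity of $\delta$ to conclude $\T_t\varphi=\lambda\varphi$ (and $-\varphi=\lambda\psi$, so $\varphi\neq 0$); conversely, from a $\T_t$-eigenform $\varphi$ of eigenvalue $\lambda\neq 0$ it exhibits the explicit $U_t$-eigenvector $\delta(\varphi,-\tfrac{1}{\lambda}\varphi)$. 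Your version globalizes this: after transporting $U_t|_{Im(\delta)}$ through the isomorphism $\delta$ onto $Im(\delta_1)\oplus Im(\delta_t)$, the operator becomes $(\varphi,\psi)\mapsto(\T_t\varphi,-\varphi)$, which is block lower triangular, so its characteristic polynomial is the product of that of $\T_t$ with $X^{\dim S^1_{k,m}(\G_0(\mathfrak{m}))}$. This buys you slightly more than the stated set equality, namely the multiplicities of the eigenvalues of $U_t|_{Im(\delta)}$ in terms of those of $\T_t$; what the paper's pointwise computation buys instead is the explicit eigenvectors $\delta(\varphi,-\tfrac{1}{\lambda}\varphi)$, which are reused later (in the discussion following Remark \ref{RemOld}, in Section \ref{SecSlopesNew} and in the proof of Theorem \ref{ThmNonDiag2}). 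The only degenerate point, common to both arguments, is that the adjoined eigenvalue $0$ is genuinely present only when $S^1_{k,m}(\G_0(\mathfrak{m}))\neq 0$; this is harmless and in any case disappears when one passes to slopes, exactly as you observe.
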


\begin{proof}
We have just seen that 0 is always an eigenvalue for $U_t$. Now let $\delta(\varphi,\psi)$ be an (old) eigenform for $U_t$
of eigenvalue $\lambda\neq 0$, then
\[ \lambda\delta(\varphi,\psi)=U_t(\delta(\varphi,\psi))=\delta_1(\T_t\varphi)-\delta_t\varphi=\delta(\T_t\varphi,-\varphi)\,.\]
The injectivity of $\delta$ yields $\T_t\varphi=\lambda\varphi$. Vice versa it is easy to check that if $\T_t\varphi=\lambda\varphi$
for some $\lambda\neq 0$, then $U_t(\delta(\varphi,-\frac{1}{\lambda}\varphi))=\lambda\delta(\varphi,-\frac{1}{\lambda}\varphi)$.
\end{proof}

\begin{rem}\label{RemOld}
The behaviour of $U_t$ on oldforms is analogous to the classical case: the eigenvalues for ${U_t}_{|Im(\delta)}$ verify equations
like $X^2-\lambda X=0$ where $\lambda$ is a nonzero eigenvalue for $\T_t$ (in the classical case the equation was
$X^2-\lambda X+p^{k-1}=0$ which reduces to our one modulo $p$, see \cite[Section 4]{GM1}).
\end{rem}

If $\varphi\in Ker (\T_t)$, then $U_t$ acts on $\langle \delta_1\varphi,\delta_t\varphi\rangle$ via
$\matrix{0}{0}{-1}{0}$ which is not diagonalizable. It is not hard to prove that when $\T_t$ is diagonalizable, then
$U_t$ is diagonalizable on $Im (\delta)$ if and only if $Ker(\T_t)=\{0\}$. When $\mathfrak{m}=1$ we never found 0 among the eigenvalues
of $\T_t$, moreover our computations led us to believe that the operator $U_t$ is diagonalizable unless $q$ is even. We will
return on this topic in Section \ref{SecTables}.

\subsection{Fricke involution and trace maps}\label{SecTrace}
Classical newforms were linked to trace maps (see, e.g., \cite[Section 4]{GM1}); to provide information on newforms we need
to compute the trace (and the twisted trace as we shall soon see) of a form. Computations are feasible only from
$S^1_{k,m}(\G_0(t))$ to $S^1_{k,m}(\G_0(1))=S^1_{k,m}(GL_2(A))$, hence from now on we shall only consider the
case $\mathfrak{m}=1$.

\noindent Let
\[ \gamma_t:=\matrix{0}{-1}{t}{0} \]
(the {\em Fricke involution} as in \cite[end of Section 2.2]{Vi}).
To shorten notations we shall often use $\varphi^{Fr}$ to denote $(\varphi\mf \gamma_t)$. \\
It is easy to compute $(\varphi^{Fr})^{Fr}=(-1)^k t^{2m-k} \varphi$. When $q$ is even, $(-1)^k=1$ and, when $q$ is odd,
$k$ has to be even to have nonzero cuspidal forms. Hence, in any case, $(-1)^k=1$ and
\begin{equation}\label{EqFricke}
(\varphi^{Fr})^{Fr}=t^{2m-k} \varphi\,.
\end{equation}
For a modular form $\varphi$ of level $1$ one gets
\begin{equation}\label{Eqgammatdelta}
(\delta_1\varphi)^{Fr}= (\delta_t\varphi) \quad{\rm and}\quad
(\delta_t\varphi)^{Fr} = t^{2m-k}(\delta_1\varphi)\,.
\end{equation}
Therefore the effect of $U_t$ on those forms is given by
\[ U_t((\delta_1\varphi)^{Fr})=U_t(\delta_t\varphi)=0 \]
and
\[ U_t((\delta_t\varphi)^{Fr})=t^{2m-k}U_t(\delta_1\varphi)=t^{2m-k}[\delta_1(\T_t\varphi)-\delta_t\varphi] \,.\]
For the trace maps we use the system of representatives provided by \cite[Lemma 3.7]{Vi}, i.e.,
\[ R:=\left\{ {\bf Id}_2, \matrix{0}{-1}{1}{b}\ b\in \F_q \right\}\,.\]

\begin{defin}\label{DefTrace}
For any cuspidal form $\varphi$ of level $t$ define the {\em trace}
\[ Tr(\varphi):=\sum_{\gamma\in R} (\varphi\mf \gamma) \]
and the {\em twisted trace}
\[ Tr'(\varphi):=Tr(\varphi^{Fr}) =\sum_{\gamma\in R} (\varphi\mf \gamma_t\gamma) \,.\]
Both $Tr$ and $Tr'$ are maps from $S_{k,m}^1(\G_0(t))$ to $S_{k,m}^1(\G_0(1))$ (see \cite[Definition 3.5]{Vi}).
\end{defin}

\noindent The basic equation for the trace map is the following
\begin{align}\label{EqTr}
Tr(\varphi) & = \varphi+\sum_{b\in \F_q} \left( \varphi\mf \matrix{0}{-1}{1}{b}\right) \nonumber \\
\ & = \varphi+\sum_{b\in \F_q}
\left( \varphi\mf \matrix{0}{-1}{t}{0}\matrix{1}{b}{0}{t}\matrix{\frac{1}{t}}{0}{0}{\frac{1}{t}}\right) \\
\ & = \varphi+ t^{k-2m} U_t(\varphi\mf \gamma_t) = \varphi+ t^{k-2m} U_t(\varphi^{Fr})\,. \nonumber
\end{align}
From this and \eqref{EqFricke}, one readily obtains
\begin{equation}\label{EqTr'}
Tr'(\varphi)=Tr(\varphi^{Fr})=\varphi^{Fr}+t^{k-2m} U_t((\varphi^{Fr})^{Fr})=\varphi^{Fr}+U_t(\varphi)\,.
\end{equation}

\subsection{Slopes of newforms}\label{SecSlopesNew}
Newforms were defined in \cite[Section 4]{GM1} as forms in the $Ker(Tr)$ but in our setting one sees that,
since all representatives in $R$ are obviously in $GL_2(A)$ and have determinant 1,
\[ Tr(\delta_1\varphi)=\sum_{\gamma\in R} (\varphi\mf \gamma)= \sum_{\gamma\in R} \varphi =(q+1)\varphi=\varphi\,,\]
and
\begin{align}\label{Eq1Trdelta}
Tr(\delta_t\varphi) & = \sum_{\gamma\in R} (\varphi\mf \matrix{t}{0}{0}{1}\gamma) =
\delta_t\varphi + \sum_{b\in \F_q} (\varphi\mf \matrix{0}{-t}{1}{b}) \nonumber \\
\ & = \delta_t\varphi + \sum_{b\in \F_q} (\varphi\mf \matrix{0}{-1}{1}{0}\matrix{1}{b}{0}{t}) \\
\ & = (\varphi\mf \matrix{t}{0}{0}{1}) + \sum_{b\in \F_q} (\varphi\mf \matrix{1}{b}{0}{t}) = \T_t\varphi \,. \nonumber
\end{align}
Therefore for any $\T_t$-eigenform $\varphi$ (of level 1) of nonzero eigenvalue $\lambda$
\[ Tr(\delta(\varphi,-\frac{1}{\lambda}\varphi))= \varphi -\frac{1}{\lambda}\T_t\varphi=0\,.\]

\noindent A more suitable definition for {\em newforms} seems to involve $Ker(Tr')$. Moreover directly from the definition one has
that $\varphi\in Ker(Tr') \iff \varphi^{Fr}\in Ker(Tr)$, i.e., $Ker(Tr')=Ker(Tr)^{Fr}$ and we define the newforms as
$Ker(Tr)\cap Ker(Tr)^{Fr}$.

Let $\varphi$ (of level $t$) be a $U_t$-(new)eigenform of eigenvalue $\lambda$, then, by \eqref{EqTr'},
\[ \varphi^{Fr}=-U_t(\varphi) \,,\]
and, by \eqref{EqTr},
\[ \varphi = -t^{k-2m}U_t(\varphi^{Fr}) \,.\]
Hence $U_t$ acts on $Ker(Tr)^{Fr}$ as the Fricke involution (modulo the sign) and the eigenvalues should be the same. Indeed
\begin{equation}\label{EqNewEig}
\lambda^2\varphi = U_t^2(\varphi) = -U_t(\varphi^{Fr}) = t^{2m-k}\varphi\,,
\end{equation}
the eigenvalues on newforms are $\pm\sqrt{t^{2m-k}}$, i.e., their slope is $m-\frac{k}{2}$.

Note that if we assume that something of the form $\delta(\varphi,-\frac{1}{\lambda}\varphi)$, for some $\T_t$-eigenform
$\varphi$ as above, is new (i.e., of type $\psi^{Fr}$ for some $\psi\in Ker(Tr)$), we obtain (by \eqref{EqFricke} and
\eqref{Eqgammatdelta})
\begin{align*}
t^{2m-k}\psi & = (\psi^{Fr})^{Fr} = \delta(\varphi,-\frac{1}{\lambda}\varphi)^{Fr} \nonumber \\
\ & = \delta_t\varphi-\frac{1}{\lambda}t^{2m-k}\delta_1\varphi = \delta(-\frac{1}{\lambda}t^{2m-k}\varphi,\varphi)\,,
\end{align*}
i.e.,
\begin{equation}\label{Eq?}
\psi = \delta(-\frac{1}{\lambda}\varphi,t^{k-2m}\varphi)\,.
\end{equation}
Now $\psi\in Ker(Tr)$ and equations \eqref{Eq1Utdelta} and \eqref{Eq2Utdelta} yield
\begin{align}
\psi & = -t^{k-2m} U_t(\psi^{Fr}) = -t^{k-2m} U_t(\delta(\varphi,-\frac{1}{\lambda}\varphi)) \nonumber \\
\ & = -t^{k-2m}\delta(\T_t\varphi,-\varphi) = -t^{k-2m}\delta(\lambda\varphi,-\varphi)\ .
\end{align}
Comparing this with \eqref{Eq?} and using the injectivity of $\delta$ one finds
\[ -\frac{1}{\lambda}=-t^{k-2m}\lambda \quad{\rm ,\ i.e.,}\quad \lambda^2=t^{2m-k} \]
as predicted by the slope for newforms. We could not find any contradiction here to the fact of
having a form new and old at the same time, anyway our computations never found the slope $m-\frac{k}{2}$
among the ones for $\T_t$ and we think that there should be a trivial intersection (as in the classical case as a consequence of
the Ramanujan conjecture)
between oldforms and newforms.

\subsection{Slopes, inseparability and non-diagonalizability}\label{SecSlopInsep}
The slope $m-\frac{k}{2}$ hints at the presence of inseparable eigenvalues whenever $k$ is odd and the characteristic is even
(the only case in which there are nonzero cuspidal forms of odd weight). We briefly recall that inseparable eigenvalues
immediately imply the non-diagonalizability of our matrices. Indeed let $K$ be any field and denote by
$\mathcal{M}_n(K)$ the $n\times n$ matrices with coefficients in $K$.

\begin{defin}
The {\em minimal polynomial} of $A\in\mathcal{M}_n(K)$ is the non-zero monic polynomial $f\in K [X]$ with least
degree such that $f(A)=0$.
\end{defin}

The minimal and characteristic polynomial of $A\in\mathcal{M}_n(K)$ have the same irreducible factors in $K[X]$
(see \cite[Corollary 4.10]{Co}) and $A$ is diagonalizable over $K$ if and only if its minimal polynomial in $K[X]$
splits in $K[X]$ and has distinct roots (\cite[Theorem 4.11]{Co}).\\
We have the following

\begin{thm}\label{ThmMinPol}
Let $V$ be a finite dimensional $\C_\infty$-vector space and $T:V\to V$ be a linear operator defined over $F$.
If the characteristic polynomial of $T$ has a root $\alpha$ which is inseparable over $F$, then $T$ is not
diagonalizable over $\C_\infty$.
\end{thm}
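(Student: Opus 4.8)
The plan is to reduce the statement to the two facts about minimal polynomials recalled just above, namely that $A \in \mathcal{M}_n(K)$ is diagonalizable over $K$ iff its minimal polynomial splits over $K$ with distinct roots, and that the minimal and characteristic polynomials have the same irreducible factors over $K$. The key observation is that although $T$ acts on the $\C_\infty$-vector space $V$, it is \emph{defined over} $F$ — meaning that in a suitable $F$-basis (here the matrix of $U_t$ has entries in $F$, indeed in $A$, by formula \eqref{Ucj}) its matrix $A$ lies in $\mathcal{M}_n(F)$. Hence the characteristic polynomial $\chi_A$ and the minimal polynomial $\mu_A$ both lie in $F[X]$, and — crucially — the minimal polynomial does not change under the field extension $F \subseteq \C_\infty$: the minimal polynomial of $A$ computed over $\C_\infty$ equals $\mu_A \in F[X]$, since $\mu_A$ already annihilates $A$ and any monic polynomial over $\C_\infty$ of smaller degree annihilating $A$ would, by taking any $F$-linear projection of its coefficients or by a linear-algebra dimension count over $F$, force a smaller-degree annihilator over $F$ — contradiction.

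First I would record this stability of the minimal polynomial under base change. Next, suppose for contradiction that $T$ is diagonalizable over $\C_\infty$. Then by \cite[Theorem 4.11]{Co} applied over $K = \C_\infty$, the polynomial $\mu_A$ splits in $\C_\infty[X]$ with distinct roots, i.e.\ $\mu_A$ is separable (as a polynomial over $F$, a fortiori over $\C_\infty$). Now let $\alpha$ be the root of $\chi_A$ that is inseparable over $F$. By \cite[Corollary 4.10]{Co}, $\mu_A$ and $\chi_A$ have the same irreducible factors in $F[X]$, so the minimal polynomial over $F$ of $\alpha$ divides $\mu_A$. But the minimal polynomial of an inseparable element is itself inseparable (it has a repeated root in $\overline{F}$), so $\mu_A$ would have a repeated root, contradicting separability of $\mu_A$. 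This contradiction shows $T$ is not diagonalizable over $\C_\infty$.

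The only delicate point — and the one I would write out carefully — is the base-change invariance of the minimal polynomial, i.e.\ that ``defined over $F$'' genuinely lets us work with $\mu_A \in F[X]$ and that diagonalizability over $\C_\infty$ is detected by this same polynomial being separable. Everything else is a formal consequence of the two cited results from \cite{Co} about minimal polynomials. One should also note the hypothesis is not vacuous in the intended application: in Section \ref{SecSlopInsep} the eigenvalue of slope $m - \tfrac{k}{2}$ satisfies $\lambda^2 = t^{2m-k}$ with $k$ odd, so $\lambda = \pm\sqrt{t}\cdot t^{(2m-k-1)/2}$ is a root of the inseparable polynomial $X^2 - t^{2m-k}$ over $F = \F_q(t)$ in characteristic $2$, which is precisely what feeds Theorem \ref{IntroThm1}.
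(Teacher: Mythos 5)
Your proof is correct and follows essentially the same route as the paper: both arguments pass to the minimal polynomial $\mu_A\in F[X]$, invoke its base-change invariance and the fact that it shares irreducible factors with the characteristic polynomial, and conclude that the inseparable minimal polynomial of $\alpha$ over $F$ forces $\mu_A$ to have a repeated root in $\C_\infty$, contradicting diagonalizability. Your write-up is in fact slightly more careful than the paper's, which simply asserts that the minimal polynomial does not depend on the base field.
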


\begin{proof}
Obviously the minimal polynomial of $A$ belongs to $F[X]$, therefore it is divisible by the minimal polynomial
of $\alpha$ over $F$. It now suffice to note that the minimal polynomial does not depend on the base field, hence
it does not have distinct roots in $\C_\infty$.
\end{proof}

\noindent Therefore $U_t$ has an inseparable eigenvalue (and is not diagonalizable) whenever $q$ is even, $k$ is odd and
$Ker(Tr)\cap Ker(Tr')\neq \{0\}$ (i.e., the space of newforms is nontrivial).
Let $\dim(S_{k,m}^1(\G_0(1)))=\alpha$ and $\dim(S_{k,m}^1(\G_0(t)))=\beta$, because of the injectivity of $\delta$ one
obviously has $\beta\geqslant 2\alpha$. Since $Tr$ and $Tr'$ are linear maps from $S_{k,m}^1(\G_0(t))$ to
$S_{k,m}^1(\G_0(1))$ one has $\dim Ker(Tr),\dim Ker(Tr') \geqslant \beta-\alpha$, moreover
$Ker(Tr)+Ker(Tr')\subset S_{k,m}^1(\G_0(t))$ so it has dimension $\leqslant \beta$. By Grassmann formula we have
\[ \dim (Ker(Tr)\cap Ker(Tr')) =\dim Ker(Tr)+\dim Ker(Tr')-\dim(Ker(Tr)+Ker(Tr'))
\geqslant \beta-2\alpha\,. \]
Therefore $\beta >2\alpha$ yields $Ker(Tr)\cap Ker(Tr')\neq \{0\}$ and $U_t$ is not diagonalizable when $k$ is odd and $q$ is even.

In the remaining case $\beta = 2\alpha$ with $Ker(Tr)\cap Ker(Tr')= \{0\}$, we can actually prove non diagonalizability in general.

\begin{thm}\label{ThmNonDiag2}
If $\beta=2\alpha$ and $Ker(Tr)\cap Ker(Tr')=\{0\}$, then $U_t$ is not diagonalizable (for any $k$ and any $q$).
\end{thm}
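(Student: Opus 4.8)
The plan is to push the whole question back to level $1$ via the degeneracy maps and then read off (non)diagonalizability from an explicit block matrix. Since $\beta=2\alpha$, Proposition \ref{PropDeltaInj} tells us that $\delta$ is an isomorphism $S^1_{k,m}(\G_0(1))\times S^1_{k,m}(\G_0(1))\xrightarrow{\ \sim\ }S^1_{k,m}(\G_0(t))$; in particular
\[ S^1_{k,m}(\G_0(t))=\delta_1S^1_{k,m}(\G_0(1))\oplus\delta_tS^1_{k,m}(\G_0(1)), \]
so there are no genuine newforms and every cusp form of level $t$ is a sum of forms coming from level $1$. Fixing a basis $\varphi_1,\dots,\varphi_\alpha$ of $S^1_{k,m}(\G_0(1))$ and ordering the induced basis $\delta_1\varphi_1,\dots,\delta_1\varphi_\alpha,\delta_t\varphi_1,\dots,\delta_t\varphi_\alpha$ of $S^1_{k,m}(\G_0(t))$, formulas \eqref{Eq1Utdelta} and \eqref{Eq2Utdelta} show that the matrix of $U_t$ in this basis is
\[ M=\matrix{T}{0}{-I_\alpha}{0}, \]
where $T\in\mathcal{M}_\alpha(F)$ is the matrix of $\T_t$ acting on $S^1_{k,m}(\G_0(1))$.

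Next I would pin down exactly when $M$ is diagonalizable. Its characteristic polynomial is $X^\alpha\det(XI_\alpha-T)$, while by \eqref{Eq2Utdelta} one has $\ker U_t=\delta_tS^1_{k,m}(\G_0(1))$, of dimension $\alpha$; hence diagonalizability forces the eigenvalue $0$ to have algebraic multiplicity exactly $\alpha$, i.e.\ $\det T\neq0$, i.e.\ $\T_t$ invertible on $S^1_{k,m}(\G_0(1))$. Conversely, if $T$ is invertible then conjugating $M$ by $\matrix{I_\alpha}{0}{-T^{-1}}{I_\alpha}$ yields $\matrix{T}{0}{0}{0}$, so in that case $U_t$ is diagonalizable exactly when $\T_t$ is. Summing up, under $\beta=2\alpha$ the operator $U_t$ is diagonalizable if and only if $\T_t$ is \emph{both} invertible \emph{and} diagonalizable on $S^1_{k,m}(\G_0(1))$, and it remains to show that this cannot happen together with $Ker(Tr)\cap Ker(Tr')=\{0\}$.

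To exploit the latter hypothesis I would rewrite it in terms of $\T_t$. Unwinding Definition \ref{DefTrace} under the identification $\delta$ — using $Tr(\delta_1\varphi)=\varphi$, $Tr(\delta_t\varphi)=\T_t\varphi$ from \eqref{Eq1Trdelta}, and the Fricke formulas \eqref{Eqgammatdelta} — one finds $Ker(Tr)=\{\delta(-\T_t\psi,\psi)\}$ and $Ker(Tr')=\{\delta(t^{2m-k}\psi,-\T_t\psi)\}$, whence $Ker(Tr)\cap Ker(Tr')\cong\ker\bigl(\T_t^2-t^{2m-k}\,\mathrm{Id}\bigr)$ on $S^1_{k,m}(\G_0(1))$; thus the hypothesis says precisely that $\T_t$ has no eigenvalue $\lambda$ with $\lambda^2=t^{2m-k}$ (the ``newform slope'' of \eqref{EqNewEig}). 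The final and hardest step is then to derive a contradiction from assuming $\T_t$ invertible and diagonalizable while $\beta=2\alpha$: one must manufacture, out of the dimension equality $\beta=2\alpha$ alone, either a nontrivial Jordan block for $\T_t$ or an eigenvalue of square $t^{2m-k}$. I expect this to be the real obstacle of the argument — it is also, presumably, the reason the authors note in Remark \ref{RemNonDiag2} that the two hypotheses are extremely unlikely ever to hold simultaneously.
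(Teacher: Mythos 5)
Your reduction is correct as far as it goes, and it is a genuinely different (and in places sharper) route than the paper's: in the basis $\delta_1\varphi_1,\dots,\delta_1\varphi_\alpha,\delta_t\varphi_1,\dots,\delta_t\varphi_\alpha$ the matrix of $U_t$ is indeed $\matrix{T}{0}{-I_\alpha}{0}$, your criterion ``$U_t$ diagonalizable $\iff$ $\T_t$ invertible and diagonalizable at level $1$'' is right, and your identification $Ker(Tr)\cap Ker(Tr')\cong \ker(\T_t^2-t^{2m-k}\,\mathrm{Id})$ checks out against \eqref{Eq1Trdelta} and \eqref{Eqgammatdelta}. But the proposal then stops. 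The ``final and hardest step'' you flag is not a routine verification you are allowed to defer: it is the entire content of the theorem, and as written you have proved nothing. So there is a genuine gap.

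The paper closes the argument along different lines: from $\beta=2\alpha$ and Grassmann it gets $S^1_{k,m}(\G_0(t))=Ker(Tr)\oplus Ker(Tr')=Im(\delta)$; it shows that every eigenvector of nonzero eigenvalue has the form $-\lambda\delta_1\psi+\delta_t\psi$ with $\T_t\psi=\lambda\psi$ and hence lies in $Ker(Tr)$; and it concludes that an eigenbasis would force $Ker(Tr')\subseteq Ker(U_t)$, which is impossible because by \eqref{EqTr'} the operator $U_t$ acts on $Ker(Tr')$ as $-Fr$, which is injective. You should be aware, though, that your own (correct) reduction puts real pressure on that last inference: if $\T_t$ were invertible and diagonalizable with no eigenvalue of square $t^{2m-k}$, then $Ker(Tr)$ together with $Im(\delta_t)=Ker(U_t)$ would already furnish an eigenbasis, while $Ker(Tr')$ would be a third, transversal subspace --- an eigenbasis is under no obligation to be adapted to the decomposition $Ker(Tr)\oplus Ker(Tr')$, so the containment $Ker(Tr')\subseteq Ker(U_t)$ does not follow from diagonalizability. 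In other words, the step you could not supply is exactly the step at which the paper's own proof is weakest: to finish either argument one must show that the hypotheses exclude $\T_t$ being simultaneously invertible and diagonalizable at level $1$, and neither your proposal nor a literal reading of the paper's final paragraph accomplishes this (compare Remark \ref{RemNonDiag2}, where the authors themselves suspect the hypotheses are never satisfied).
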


\begin{proof}
The hypotheses immediately imply:\begin{itemize}
\item by Grassmann
\[ \beta \geqslant \dim(Ker(Tr)+Ker(Tr')) = \dim Ker(Tr)+\dim Ker(Tr') \geqslant 2\beta -2\alpha =\beta\, \]
i.e.,
\[ S_{k,m}^1(\G_0(t))= Ker(Tr)\oplus Ker(Tr')\,;\]
\item $\dim S_{k,m}^1(\G_0(t))=2\dim S_{k,m}^1(\G_0(1)) =\dim Im(\delta)$, so $S_{k,m}^1(\G_0(t))=Im (\delta)$,
i.e., $\delta$ is an isomorphism and the whole space $S_{k,m}^1(\G_0(t))$ is generated by {\em oldforms}.
\end{itemize}
Let $\delta_1\varphi+\delta_t\psi$ be an eigenvector of eigenvalue $\lambda\neq 0$, then
\[ U_t(\delta_1\varphi+\delta_t\psi)=\delta_1(\T_t\varphi)-\delta_t\varphi=\lambda(\delta_1\varphi+\delta_t\psi)\,. \]
The injectivity of $\delta$ yields
\[ \T_t\varphi=\lambda\varphi\quad{\rm and}\quad -\varphi=\lambda\psi\quad {\rm (hence\ } \T_t\psi=\lambda\psi{\rm)} \]
so that
\[ \delta_1\varphi+\delta_t\psi = -\lambda\delta_1\psi+\delta_t\psi\,.\]
Thus
\begin{align*} Tr(\delta_1\varphi+\delta_t\psi) & = Tr(-\lambda\delta_1\psi+\delta_t\psi) \\
\ & = -\lambda\psi + \T_t\psi =-\lambda\psi+\lambda\psi=0
\end{align*}
(this provides a certain analogue of Section \ref{SecSlopesOld}: from an old eigenvector of eigenvalue
$\lambda$ for $U_t$ we derived the existence of an eigenvector of eigenvalue $\lambda$ for $\T_t$).

\noindent Therefore all eigenvectors of eigenvalue different from 0 are inside $Ker(Tr)$ and, to be able to find a basis
of eigenvectors for $S_{k,m}^1(\G_0(t))=Ker(Tr)\oplus Ker(Tr')$, we need $Ker(Tr')$ to be contained in the eigenspace
of eigenvalue 0, i.e., in $Ker(U_t)$. But on the subspace $Ker(Tr')$, the operator $U_t$ coincides with the Fricke involution,
hence it has to have some nonzero eigenvalues (unless $Ker(Tr')=0$, but then $S_{k,m}^1(\G_0(t))=0$ and there is nothing to prove).
\end{proof}

\begin{rem}\label{RemNonDiag2}
Together with Section \ref{SecSlopesNew} this proves that if $k$ is odd and $q$ is even, then $U_t$ is not diagonalizable.
Theorem \ref{ThmNonDiag2} holds in general (no inseparable eigenvalues appear there): it would be interesting
to investigate the possibility of having $\beta=2\alpha$ and $Ker(Tr)\cap Ker(Tr')=\{0\}$ for general $k$ and $q$ and we
would be quite surprised if that can actually happen as $k$ grows.
\end{rem}

\section{Action of $U_t$ on cusp forms for $\G_1(t)$} \label{SecGamma1}
The tree $\G_1(t)\backslash \Tr$ has two cusps (\cite[Proposition 5.6]{GN}) corresponding to $[1:0]$ and $[0:1]$.
The path connecting the two cusps is a fundamental domain for $\G_1(t)$. Here is a picture of it
\[ \xymatrix { .. v_{-2,0}=\matrix {1}{0}{0}{t^2} \ar@/^2.0pc/@{<-}[r]^{\overline{e}_{-2,0}=\matrix {0}{1}{t}{0}}
\ar@/_2.0pc/[r]_{e_{-2,0}=\matrix {1}{0}{0}{t^2}} &
v_{-1,0}=\matrix {1}{0}{0}{t} \ar@/^2.0pc/@{<-}[r]^{\overline{e}_{-1,0}=\matrix {0}{1}{1}{0}}
\ar@/_2.0pc/[r]_{e_{-1,0}=\matrix {1}{0}{0}{t}} &
v_{0,0}=\matrix {1}{0}{0}{1} \ar@/^2.0pc/@{<-}[r]^{\overline{e}_{0,0}=\matrix {0}{t}{1}{0}}
\ar@/_2.0pc/[r]_{e_{0,0}=\matrix {1}{0}{0}{1}} &
v_{1,0}=\matrix {t}{0}{0}{1} \ar@/^2.0pc/@{<-}[r]^{\overline{e}_{1,0}=\matrix {0}{t^2}{1}{0}}
\ar@/_2.0pc/[r]_{e_{1,0}=\matrix {t}{0}{0}{1}} &
v_{2,0}..
 } \]
This fundamental domain does not contain stable vertices but one stable edge, namely $\e:=\overline{e}_{-1,0}$.

\subsection{The action of $U_t$}
We compute the action of $U_t$ at $\c(\e)$ for any harmonic cocycle $\c$. \medskip

\noindent \underline{{\bf Warning.}} {\em To shorten some notations and simplify some exponents we normalize Hecke operators
multiplying by the factor $t^{k-m}$ (as in \cite{LM}): this will erase any reference to the type $m$ in the final formulas.
Moreover the slope for newforms will transform into $\frac{k}{2}$ (from the previous $m-\frac{k}{2}$). We shall recall
(and reverse) this normalization in some explicit computations of eigenforms in Section \ref{SecAntidiagonalNewforms}
and Example \ref{Ex359}.}

\begin{align*}
& U_t(\c(\e))(X^jY^{k-2-j}) =
t^{k-m} \bigg\{ \sum_{\b\in\F_q} \matrix {1}{\b}{0}{t}^{-1}\c\left( \matrix {1}{\b}{0}{t}\e \right)\bigg\}(X^jY^{k-2-j})  \\
& =t^{k-m}\left\{\matrix {1}{0}{0}{t}^{-1}\c\left( \matrix {1}{0}{0}{t} \e \right) +
\sum_{\b\in\F_q^*} \matrix {1}{\b}{0}{t}^{-1}\c\left( \matrix {1}{\b}{0}{t}\e \right)\right\}(X^jY^{k-2-j})\, .
\end{align*}
Split the above equation in two parts
\begin{gather*}
A:= t^{k-m} \left\{\matrix {1}{0}{0}{t}^{-1}\c\left( \matrix {1}{0}{0}{t} \e \right)\right\} (X^jY^{k-2-j})\, ,\\
B:= t^{k-m} \left\{ \sum_{\b\in\F_q^*} \matrix {1}{\b}{0}{t}^{-1}\c\left( \matrix {1}{\b}{0}{t}\e \right)\right\}(X^jY^{k-2-j})\, .
\end{gather*}

Let us start with $A$. Then
\begin{align*}
A & = t^{k-m}\matrix {1}{0}{0}{\frac{1}{t}} \c\left(\matrix{1}{0}{0}{t}\matrix{0}{1}{1}{0}\right)(X^jY^{k-2-j}) \\
& = t^{k-m}\left(\frac{1}{t}\right)^{1-m} \c\left(\matrix{0}{1}{t}{0}\right)\left(X^j\left(\frac{Y}{t}\right)^{k-2-j}\right) \\
& = t^{j+1}\c\left(\matrix{0}{1}{t}{0}\right)(X^jY^{k-2-j}) \,.
\end{align*}
Now use harmonicity
\begin{equation}\label{EqHar1}
\c\left(\matrix{0}{1}{t}{0}\right) = -\sum_{v\in \mathbb{F}_q} \c\left(\matrix{1}{0}{vt}{1} \matrix{1}{0}{0}{t}\right)
\end{equation}
(with $\bigl(\begin{smallmatrix}1&0\\vt&1\end{smallmatrix}\bigr)\in \Gamma_1(t)$) to get
\begin{align*}
A & = -t^{j+1}\left\{\c\left(\matrix{1}{0}{0}{1} \matrix{1}{0}{0}{t}\right) +
\sum_{v\in \mathbb{F}_q^*} \c\left(\matrix{1}{0}{vt}{1} \matrix{1}{0}{0}{t}\right)\right\}(X^jY^{k-2-j}) \\
& = -t^{j+1}\left\{\c\left(\matrix{1}{0}{0}{t}\right) +
\sum_{v\in \mathbb{F}_q^*} \matrix{1}{0}{vt}{1}\c\left(\matrix{1}{0}{0}{t}\right)\right\}(X^jY^{k-2-j}) \\
& = - t^{j+1}\c\left(\matrix{1}{0}{0}{t}\right)(X^jY^{k-2-j}) -
t^{j+1}\sum_{v\in \mathbb{F}_q^*} \c\left(\matrix{1}{0}{0}{t}\right)\left(X^j(vtX+Y)^{k-2-j}\right) \\
& = -t^{j+1}\c(e)(X^jY^{k-2-j})-
t^{j+1}\sum_{v\in \mathbb{F}_q^*} \c(e)\left(\sum_{n=0}^{k-2-j} \binom{k-2-j}{n} v^nt^nX^{j+n}Y^{k-2-j-n}\right) \\
& = -t^{j+1}\c(e)(X^jY^{k-2-j}) - t^{j+1} \c(e)
\left(\sum_{n=0}^{k-2-j} \binom{k-2-j}{n} t^nX^{j+n}Y^{k-2-j-n} \sum_{v\in \mathbb{F}_q^*} v^n\right) \\
& = -t^{j+1}\c(e)(X^jY^{k-2-j}) + t^{j+1} \c(e)
\left(\sum_{\begin{subarray}{c} n\equiv 0\pmod{q-1} \\ n\geqslant 0\end{subarray}} \binom{k-2-j}{n} t^nX^{j+n}Y^{k-2-j-n} \right) \\
& = t^{j+1}\c(e)
\left(\sum_{\begin{subarray}{c} n\equiv 0\pmod{q-1} \\ n\geqslant q-1\end{subarray}} \binom{k-2-j}{n} t^nX^{j+n}Y^{k-2-j-n} \right)
\end{align*}
(the first term of the sum cancels with the term on the left, sign has changed because of
$\displaystyle{\sum_{v\in \mathbb{F}_q^*} v^n =-1}$ for $n\equiv 0\pmod{q-1}$).

As for part $B$, note that
\begin{align*}
\matrix{1}{\b}{0}{t}\e & = \matrix{1}{\b}{0}{t} \matrix{0}{1}{1}{0} = \matrix{\b}{1}{t}{0}\\
& = \matrix{1}{0}{\frac{t}{\b}}{1}\matrix{1}{0}{0}{t}\matrix{\b}{1}{0}{-\frac{1}{\b}} =: \g_\b\, e\, \iota_\b
\end{align*}
where $\g_\b\in \Gamma_1(t)$ and $\iota_\b\in \I(F_\infty)$. Then
\begin{align*}
B & = t^{k-m}
\sum_{\b\in\mathbb{F}_q^*} \matrix{1}{-\frac{\b}{t}}{0}{\frac{1}{t}} \c(\g_\b\, e\, \iota_\b)(X^jY^{k-2-j}) \\
& = t^{k-m} \sum_{\b\in\mathbb{F}_q^*}
\left(\matrix{1}{-\frac{\b}{t}}{0}{\frac{1}{t}} \g_\b\right) \c(e )(X^jY^{k-2-j}) \\
& = t^{k-m} \sum_{\b\in\mathbb{F}_q^*} \matrix{0}{-\frac{\b}{t}}{\frac{1}{\b}}{\frac{1}{t}} \c(e)(X^jY^{k-2-j}) \\
& = t^{k-m} \left(\frac{1}{t}\right)^{1-m} \c(e )
\left( \sum_{\b\in\mathbb{F}_q^*} \left(-\frac{\b}{t}Y\right)^j\left( \frac{X}{\b}+\frac{Y}{t}\right)^{k-2-j}\right) \\
& = t \c(e) \left(\sum_{\b\in\mathbb{F}_q^*} (-1)^j \b^{2+2j-k} Y^j(tX+\b Y)^{k-2-j} \right)\\
& = t \c(e )
\left(\sum_{\b\in\mathbb{F}_q^*} \sum_{n=0}^{k-2-j} (-1)^j \b^{2+2j-k} Y^j \binom{k-2-j}{n} t^nX^n \b^{k-2-j-n}Y^{k-2-j-n} \right) \\
& = t \c(e )\left( \sum_{n=0}^{k-2-j} \binom{k-2-j}{n} (-1)^j t^nX^n Y^{k-2-n} \sum_{\b\in\mathbb{F}_q^*} \b^{j-n}\right) \\
& = -t \c(e )
\left(\sum_{\begin{subarray}{c} n\equiv j\pmod{q-1} \\ n\geqslant 0\end{subarray}} \binom{k-2-j}{n} (-1)^j t^nX^n Y^{k-2-n} \right) \,.
\end{align*}
To make it more compatible with the term $A$ (obtained for $\b=0$) we change the index using $n=\ell+j$, to get
\begin{align*}
B & =  -t \c(e )
\left(\sum_{\begin{subarray}{c} \ell\equiv 0\pmod{q-1} \\ \ell\geqslant -j\end{subarray}}
\binom{k-2-j}{\ell+j} (-1)^j t^{\ell+j}X^{\ell+j} Y^{k-2-j-\ell} \right) \\
& = (-t)^{j+1} \c(e )
\left(\sum_{\begin{subarray}{c} \ell\equiv 0\pmod{q-1} \\ \ell\geqslant -j\end{subarray}}
\binom{k-2-j}{\ell+j} t^\ell X^{\ell+j} Y^{k-2-j-\ell} \right) \,.
\end{align*}

Putting all together, the final general formula is
\begin{align}\label{FinEqTt}
 U_t(\c(\e))(X^jY^{k-2-j}) & =
t^{j+1}\c(e) \left\{ \sum_{\begin{subarray}{c} n\equiv 0\pmod{q-1} \\
n\geqslant q-1\end{subarray}} \binom{k-2-j}{n} t^nX^{j+n}Y^{k-2-j-n} + \right.  \\
 & \qquad +\left.  (-1)^{j+1} \sum_{\begin{subarray}{c} n\equiv 0\pmod{q-1} \\
n\geqslant -j\end{subarray}} \binom{k-2-j}{n+j} t^nX^{j+n} Y^{k-2-j-n} \right\}\,. \nonumber
\end{align}

\subsection{Action of $U_t$ on a particular basis} \label{UBasis}
To check the diagonalizability of the operator $U_t$, we fix the same basis used
in \cite{LM}.

Recall that by \cite[Corollary 5.7]{GN} we have $g(\G_1(t))=0$ and, applying Theorem \ref{DimTorFree}, we obtain
\begin{itemize}
\item[{\bf 1.}] {$\dim_{\C_\infty} S^1_k(\G_1(t))=k-1$;}
\item[{\bf 2.}] {$\dim_{\C_\infty} S^2_k(\G_1(t))= \left\{ \begin{array}{cr}
0 & k=2 \\
\ & \\
k-3 & k>2
\end{array}\right.$\ .}
\end{itemize}

For any $j\in\{0,1,\dots,k-2\}$, let $\c_j(\e)$ be defined by
\[ \c_j(\e)(X^iY^{k-2-i})=\left\{ \begin{array}{ll} 1 & {\rm if}\ i=j \\
\ & \\
0 & {\rm otherwise} \end{array} \right. \ .\]
The sets $\mathcal{B}^1_k(\G_1(t)):=\{\c_j(\e)\,,\,0\leqslant j\leqslant k-2\}$ and
$\mathcal{B}^2_k(\G_1(t)):=\{\c_j(\e)\,,\,1\leqslant j\leqslant k-3\}$ are bases for
$S^1_k(\G_1(t))$ and $S^2_k(\G_1(t))$ respectively (recall that the elements of $S^2_k(\G_1(t))$
have to vanish on $X^{k-2}$ and $Y^{k-2}$). We shall work mainly on $\mathcal{B}^1_k(\G_1(t))$, the results for
$\mathcal{B}^2_k(\G_1(t))$ will easily follow and we shall point them out in some remarks along the way.
Then, unless otherwise stated, $U_t$ is understood to act on $S^1_k(\G_1(t))$.

Specializing equation \eqref{FinEqTt} at $\c=\c_j$, we get nonzero values only for $n=0$, i.e.,
the first sum vanishes and the second one is reduced to
\begin{align*}
 U_t(\c_j(\e)) (X^jY^{k-2-j}) & = t^{j+1}\c_j(e) \left( (-1)^{j+1} \binom{k-2-j}{j} X^j Y^{k-2-j} \right) \\
& = -(-t)^{j+1} \binom{k-2-j}{j} \c_j(\e) (X^j Y^{k-2-j} ) = -(-t)^{j+1} \binom{k-2-j}{j} \ .
\end{align*}
Now we look for the value of $U_t(\c_j)(\e)$ at $X^iY^{k-2-i}$ for any $i\neq j$, so that equation \eqref{FinEqTt} becomes
\begin{equation}\label{Eqineqj}
\begin{split}
U_t(\c_j(\e)) (X^iY^{k-2-i})& = t^{i+1}\c_j(e) \left\{ \sum_{\begin{subarray}{c} n\equiv 0\pmod{q-1} \\
n\geqslant q-1\end{subarray}} \binom{k-2-i}{n} t^nX^{i+n}Y^{k-2-i-n}+ \right. \\
& \left. \quad + (-1)^{i+1}
\sum_{\begin{subarray}{c} n\equiv 0\pmod{q-1} \\ n\geqslant -i\end{subarray}} \binom{k-2-i}{n+i} t^nX^{i+n} Y^{k-2-i-n} \right\} \ .
\end{split}
\end{equation}
To get nonzero values one needs $n+i=j$ and $n\equiv 0\pmod{q-1}$, so we write $i=j+h(q-1)$ for some $h\in \Z-\{0\}$
(note also that in any case $(-1)^{i+1}=(-1)^{j+1}$) and get
\begin{align*}
 U_t(\c_j(\e)) & (X^iY^{k-2-i}) \\
& = -t^{i+1}\c_j(\e) \left( \binom{k-2-j-h(q-1)}{-h(q-1)} t^{-h(q-1)}X^jY^{k-2-j} \right.\\
& \left. \qquad  + (-1)^{i+1} \binom{k-2-j-h(q-1)}{j} t^{-h(q-1)}X^j Y^{k-2-j} \right) \\
& = -t^{j+1}\left[ \binom{k-2-j-h(q-1)}{-h(q-1)} + (-1)^{j+1} \binom{k-2-j-h(q-1)}{j} \right] \c_i(\e) (X^iY^{k-2-i}).
\end{align*}
Note that many coefficients would be 0, for example $\binom{k-2-j-h(q-1)}{-h(q-1)}$ vanishes for any $h>0$ and
$\binom{k-2-j-h(q-1)}{j}$ vanishes whenever $k-2-j-h(q-1)<0$, but this is useful for a general formula.\bigskip

Finally we have
\begin{align}\label{Ttcj}
U_t(\c_j(\e)) & = -(-t)^{j+1} \binom{k-2-j}{j} \c_j(\e) -t^{j+1}\sum_{h\neq 0}\left[ \binom{k-2-j-h(q-1)}{-h(q-1)} \right.\\
\ &\left. + (-1)^{j+1} \binom{k-2-j-h(q-1)}{j} \right] \c_{j+h(q-1)}(\e)  \nonumber
\end{align}
(where it is understood that $\c_{j+h(q-1)}(\e) \equiv 0$ whenever $j+h(q-1)<0$ or $j+h(q-1)>k-2$).

From formula \eqref{Ttcj} one immediately notes that the $\c_j$ can be divided into classes modulo $q-1$ and
every such class is stable under the action of $U_t$.
We shall denote by $C_j$ the class of $\c_j(\e)$, i.e, $C_j=\{\c_j(\e),\c_{j+(q-1)}(\e),\dots\}$: the cardinality
of $C_j$ is the largest integer $n$ such that $j+(n-1)(q-1)\leqslant k-2$ (note that it is possible to have
$|C_j|=0$, exactly when $j>k-2$). Reordering the basis in the following way
$\mathcal{B}^1_k(\G_1(t))=\{C_0,C_1,\dots,C_{q-2}\}$, one has that the matrix associated to the action of $U_t$ has (at most) $q-1$
blocks (of dimensions $|C_j|$, $0\leqslant j\leqslant q-2$) on the diagonal and 0 everywhere else. Obviously
the matrix is diagonalizable if and only if each block is. In particular $|C_j|\leqslant 1$ (i.e., $j+(q-1)>k-2$)
always yields a diagonal (or empty) block.

Fix $j$ and $k$ and assume that $C_j$ has exactly $n$ elements (i.e., the last one is $\c_{j+(n-1)(q-1)}(\e)$). Let
$m_{a,b}(j,k)$ be the coefficient in the $a$-th row and $b$-th column of the matrix associated to the block $C_j$, then
$m_{a,b}(j,k)$ is the coefficient of $\c_{j+(a-1)(q-1)}(\e)$ in $U_t(\c_{j+(b-1)(q-1)}(\e))$.
Using formula \eqref{Ttcj} one finds
\begin{equation}\label{EqCoeffMj}
m_{a,b}(j,k) = \left\{ \begin{array}{ll} \displaystyle{-t^{j+1+(b-1)(q-1)}\left[\binom{k-2-j-(a-1)(q-1)}{(b-a)(q-1)} \right.} & \\
\displaystyle{\left. +(-1)^{j+1+(b-1)(q-1)}\binom{k-2-j-(a-1)(q-1)}{j+(b-1)(q-1)}\right]} & {\rm if}\ a\neq b \\
\ & \\
\displaystyle{-(-t)^{j+1+(a-1)(q-1)}\binom{k-2-j-(a-1)(q-1)}{j+(a-1)(q-1)}} & {\rm if}\ a = b \end{array}\right.
\end{equation}
(for future reference note that for any $q$ one has $(-1)^{(\ell-1)(q-1)}=1$ for any $\ell$).

\subsection{The blocks associated to $S^1_{k,m}(\G_0(t))$}\label{SecSpBlocks}
A set of representatives for $\G_0(t)/\G_1(t)$ is provided by the $(q-1)^2$ matrices
$R^0_1=\left\{ \matrix{a}{0}{0}{d}\,:\,a,d\in \F_q^*\right\}$, hence a cocycle $\c_j$ comes from $S^1_{k,m}(\G_0(t))$
if and only if it is $R^0_1$-invariant.

\noindent A computation similar to the previous ones leads to
\[ \matrix{a}{0}{0}{d}^{-1}\c_j\left(\matrix{a}{0}{0}{d}\e\right)(X^\ell Y^{k-2-\ell})=
a^{m-1-\ell}d^{m-k+\ell+1}\c_j(\e)(X^\ell Y^{k-2-\ell})\,.\]
Therefore
\[ \matrix{a}{0}{0}{d}\cdot\c_j=a^{m-1-j}d^{m-k+j+1}\c_j\quad \forall j \]
and this is $R^0_1$-invariant if and only if
\[ a^{m-1-j}d^{m-k+j+1}=1 \quad \forall a,d\in \F_q^*\,.\]
This yields
\[ j\equiv m-1\equiv k-m-1 \pmod{q-1} \,,\]
i.e.,
\[ k\equiv 2j+2 \pmod{q-1} \]
(and $k\equiv 2m\pmod{q-1}$ as natural to get a nonzero space of cuspidal forms for $\G_0(t)\,$). If $q$ is even this
provides a unique class $C_j$, if $q$ is odd then we have two solutions $j$ and $j+\frac{q-1}{2}$. Note that in any case
$k$ has the form $2j+2+(n-1)(q-1)$ for some integer $n$ and the classes corresponding to $S^1_{k,m}(\G_0(t))$
are determined by the type $m$ (as predictable since $m$ plays a role in $S^1_{k,m}(\G_0(t))$ but not in $S^1_k(\G_1(t))$,
because all matrices in $\G_1(t)$ have determinant 1).

\subsection{Dimension of $S^1_{k,m}(\G_0(t))$-blocks}
Assume $q$ is even, take $j$ as the unique solution of $k\equiv 2j+2\pmod{q-1}$ and write $k=2j+2+(n-1)(q-1)$
(note that $k$ is even if and only if $n$ is odd).
The class $C_j$ has exactly $n$ elements. Indeed $j+(n-1)(q-1)$ is obviously
$\leqslant k-2=2j+(n-1)(q-1)$ and
\[ j+n(q-1)=k-2-j+(q-1)\geqslant k-2-(q-2)+(q-1) = k-1\,.\]

When $q$ is odd there are two solutions to $k\equiv 2j+2\pmod{q-1}$ and we put $j$ for the smallest (positive) one
and $j+\frac{q-1}{2}$ for the other (note that the classes $C_j$ and $C_{j+\frac{q-1}{2}}$ are always disjoint).
As before we can write
\[ k=2j+2+(n-1)(q-1)=2(j+\frac{q-1}{2})+2+(n-2)(q-1) \]
and it is easy to check that
\[ |C_j|=n\quad{\rm and}\quad |C_{j+\frac{q-1}{2}}|=n-1 \,.\]

\begin{rem}\label{RemDimCusp}
This could be seen as an easy alternative to the Rieman-Roch argument usually used to
compute the dimension of such spaces, see for example \cite[Section 4]{Cor}.
\end{rem}

\subsection{Matrices for the block $C_j$}\label{SecSymmetry}
We denote by $M(j,n,q,t)$ the matrix associated to the action of $U_t$ on $C_j$ (for $k=2j+2+(n-1)(q-1)\,$)
and by $M(j,n,q)$ the corresponding coefficient matrix (i.e., the one without the powers of $t$).
To avoid repetitions we shall consider the block associated to $C_j$ of dimension $n$ with $k=2j+2+(n-1)(q-1)$,
(formulas for $C_{j+\frac{q-1}{2}}$ are the same, just substitute $j$ with $j+\frac{q-1}{2}$ and take into account
the different parity of the dimension $n-1$).

Using formula \eqref{EqCoeffMj} we see that the general entries of $M(j,n,q)$ are
\begin{equation}\label{spam}
\displaystyle{ m_{a,b}= \left\{\begin{array}{ll}
\displaystyle{-\left[\binom{j+(n-a)(q-1)}{j+(n-b)(q-1)} + (-1)^{j+1} \binom{j+(n-a)(q-1)}{j+(b-1)(q-1)}\right]} & {\rm if}\ a\neq b \\
\ & \\
\displaystyle{(-1)^{j+2}\binom{j+(n-a)(q-1)}{j+(a-1)(q-1)}} & {\rm if}\ a=b \end{array}\right. \,.}
\end{equation}
It is easy to check that $M(j,n,q)$ satisfies some symmetry relations. In particular
\begin{itemize}\label{symmetry}
\item[1.] {\em symmetry between columns}: $m_{a,n+1-b}=(-1)^{j+1}m_{a,b}$ for any $a\neq b, n+1-b$
(i.e., outside diagonal and antidiagonal);
\item[2.] {\em symmetry between diagonal and antidiagonal}: $m_{a,n+1-a}=(-1)^{j+1}(m_{a,a}-1)$ for any $a\neq n+1-a$;
\item[3.] {\em central column for odd $n$}: $m_{\frac{n+1}{2},\frac{n+1}{2}}=(-1)^{j+2}$ and
\[ m_{a,\frac{n+1}{2}}=-\binom{j+(n-a)(q-1)}{j+\frac{n-1}{2}(q-1)}(1+(-1)^{j+1})\quad {\rm for}\ a\neq \frac{n+1}{2}\,,\]
which is 0 for $a\geqslant \frac{n+3}{2}$.
\end{itemize}

Moreover for the elements on the antidiagonal (i.e., the ones with $b=n+1-a$), we only need to check the first $\frac{n}{2}$
(resp. $\frac{n-1}{2}$) columns if $n$ is even (resp. if $n$ is odd) thanks to the symmetries above. One has\begin{itemize}
\item[4.] {\em antidiagonal, $n$ even and $\frac{n}{2}+1\leqslant a\leqslant n$}:
\[ m_{a,n+1-a}=-\left[\binom{j+(n-a)(q-1)}{j+(a-1)(q-1)} + (-1)^{j+1} \binom{j+(n-a)(q-1)}{j+(n-a)(q-1)}\right]=(-1)^{j+2} \]
(because in our range $n-a<a-1$);
\item[5.] {\em antidiagonal, $n$ odd and $\frac{n+3}{2}\leqslant a\leqslant n$}:
\[ m_{a,n+1-a}=-\left[\binom{j+(n-a)(q-1)}{j+(a-1)(q-1)} + (-1)^{j+1} \binom{j+(n-a)(q-1)}{j+(n-a)(q-1)}\right]=(-1)^{j+2} \]
as well (because in our range $n-a<a-1$ again).
\end{itemize}
Note that 4 and 5, together with 2, yield
\[ (-1)^{j+2}=(-1)^{j+1}(m_{a,a}-1)\,,\quad{\rm i.e.,}\quad m_{a,a}=0 \]
in the range in which 4 and 5 hold.

Finally below the antidiagonal (i.e., for $b>n+1-a$), checking only the first columns as above, we find
\begin{itemize}
\item[6.] {\em below antidiagonal, $n$ even and $\frac{n}{2}+1\leqslant a\leqslant n-1$}:
\[ -\left[\binom{j+(n-a)(q-1)}{j+(n-b)(q-1)} + (-1)^{j+1} \binom{j+(n-a)(q-1)}{j+(b-1)(q-1)}\right]=0 \]
(because in our range $n-a<n-b$ and $n-a<b-1$);
\item[7.] {\em below antidiagonal, $n$ odd and $\frac{n+3}{2}\leqslant a\leqslant n-1$}:
\[ -\left[\binom{j+(n-a)(q-1)}{j+(n-b)(q-1)} + (-1)^{j+1} \binom{j+(n-a)(q-1)}{j+(b-1)(q-1)}\right]=0 \]
as well (because in our range $n-a<n-b$ and $n-a<b-1$ again).
\end{itemize}

Putting all these information together we can see that for any even $n$ the matrix $M(j,n,q)$
has the following shape
\[ \left(\begin{array}{cccccccc} m_{1,1} & m_{1,2} & \cdots & m_{1,\frac{n}{2}} & (-1)^{j+1}m_{1,\frac{n}{2}}
& \cdots & (-1)^{j+1}m_{1,2} & (-1)^{j+1}(m_{1,1}-1)\\
m_{2,1} & m_{2,2} & \cdots & m_{2,\frac{n}{2}} & (-1)^{j+1}m_{2,\frac{n}{2}} & \cdots & (-1)^{j+1}(m_{2,2}-1) & (-1)^{j+1}m_{2,1}\\
\vdots & \vdots &   & \vdots & \vdots &  & \vdots & \vdots\\
m_{\frac{n}{2},1} & m_{\frac{n}{2},2} & \cdots & m_{\frac{n}{2},\frac{n}{2}} & (-1)^{j+1}(m_{\frac{n}{2},\frac{n}{2}}-1) & \cdots &
(-1)^{j+1}m_{\frac{n}{2},2} & (-1)^{j+1}m_{\frac{n}{2},1}\\
m_{\frac{n}{2}+1,1} & m_{\frac{n}{2}+1,2} & \cdots & (-1)^{j+2} & 0 & \cdots & (-1)^{j+1}m_{\frac{n}{2}+1,2}
& (-1)^{j+1}m_{\frac{n}{2}+1,1}\\
\vdots & \vdots & \udots  & \vdots & \vdots & \ddots & \vdots & \vdots\\
m_{n-1,1} & (-1)^{j+2} & \cdots  & 0 & 0 &  \cdots & 0 & (-1)^{j+1}m_{n-1,1}\\
(-1)^{j+2} & 0 & \cdots  & 0 & 0 & \cdots & 0 & 0
\end{array}
\right) \]
while, for odd $n$, one simply needs to modify the indices a bit and add the central $\frac{n+1}{2}$-th column
\[ \begin{array}{c} m_{1,\frac{n+1}{2}} \\ \vdots \\ m_{\frac{n-1}{2},\frac{n+1}{2}} \\ (-1)^{j+2} \\ 0 \\ \vdots \\ 0 \end{array}\,.\]

\section{Diagonalizability of $U_t$ on blocks associated to $S^1_{k,m}(\G_0(t))$} \label{Char2Sec}
We present here some results which provide some evidence for the diagonalizability of $U_t$.
Since blocks of dimension 1 are already diagonal, we shall tacitly assume $|C_j|\geqslant 2$ ignoring
the trivial cases $|C_j|=0,1$.
All theorems will rely on direct computation of binomial coefficients and the bounds
required for some of the parameters are actually sharp as we shall show with some examples.
Most of the computations will rely on the following well known

\begin{lem}\label{KummerThm}(Lucas's Theorem)
Let $n,m\in\N$ with $m\leqslant n$ and write their $p$-adic expansions as
$n=n_0+n_1p+\dots +n_d p^d$, $m=m_0+m_1 p + \dots +m_d p^d$. Then
\[ \binom{n}{m} \equiv \binom{n_0}{m_0} \binom{n_1}{m_1} \dots \binom{n_d}{m_d} \pmod p\,.\]
\end{lem}

\begin{proof}
See \cite{DW} or \cite{Gr}.
\end{proof}

\subsection{Diagonalizability of $M(j,n,q)$: case $n\leqslant j+1$}
We can now prove the following

\begin{thm}\label{ThmAntidiagonal}
Let $n\in\N$ and $0\leqslant j\leqslant q-2$. Then, for all $n\leqslant j+1$, the matrix $M(j,n,q)$ is antidiagonal.
\end{thm}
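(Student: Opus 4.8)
The goal is to show that when $n \leqslant j+1$ every off-antidiagonal entry of $M(j,n,q)$ vanishes, i.e. $m_{a,b} = 0$ whenever $b \neq n+1-a$. The plan is to work directly from the explicit formula \eqref{spam} and kill the two binomial coefficients appearing there by showing that in the relevant index ranges the "top" of each binomial is strictly smaller than its "bottom", so the binomial is literally $0$ in $\Z$ (no need even for Lucas's Theorem in most of the argument). Recall the entries: for $a \neq b$,
\[
m_{a,b} = -\left[\binom{j+(n-a)(q-1)}{j+(n-b)(q-1)} + (-1)^{j+1}\binom{j+(n-a)(q-1)}{j+(b-1)(q-1)}\right],
\]
and on the diagonal $m_{a,a} = (-1)^{j+2}\binom{j+(n-a)(q-1)}{j+(a-1)(q-1)}$.

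First I would handle the diagonal entries with $a \neq n+1-a$. Here the top is $j+(n-a)(q-1)$ and the bottom is $j+(a-1)(q-1)$; the binomial is nonzero only if $(n-a)(q-1) \geqslant (a-1)(q-1)$, i.e. $n-a \geqslant a-1$, i.e. $a \leqslant \tfrac{n+1}{2}$. But one must also keep the top from exceeding... actually the real constraint: when $a \leqslant \tfrac{n+1}{2}$ the binomial $\binom{j+(n-a)(q-1)}{j+(a-1)(q-1)}$ need not vanish in $\Z$, so here I would invoke Lucas's Theorem (Lemma \ref{KummerThm}) together with the hypothesis $n \leqslant j+1$: writing everything base $p$ and using $q-1 = p^r - 1 = (p-1)(1 + p + \dots + p^{r-1})$, the digit-wise comparison forces the binomial to be $\equiv 0 \pmod p$ because the bottom $j + (a-1)(q-1)$ has, in some digit, a value exceeding that of the top — this is exactly where $n \leqslant j+1$ (equivalently $k = 2j+2+(n-1)(q-1) \leqslant 2j+2 + j(q-1)$, keeping $j+(n-a)(q-1)$ controlled relative to $j$) is used. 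The off-diagonal, non-antidiagonal entries are then dispatched the same way: in $\binom{j+(n-a)(q-1)}{j+(n-b)(q-1)}$ the binomial is automatically $0$ when $b < a$ (top $<$ bottom outright), and when $b > a$ one uses Lucas as above; the second binomial $\binom{j+(n-a)(q-1)}{j+(b-1)(q-1)}$ is handled symmetrically (it vanishes outright when $b-1 > n-a$, i.e. $a+b > n+1$, and by Lucas when $a+b \leqslant n+1$, $b \neq n+1-a$, again invoking $n \leqslant j+1$). A clean bookkeeping observation: since $0 \leqslant j \leqslant q-2$ and $n \leqslant j+1 \leqslant q-1$, the quantity $j + (n-a)(q-1)$ stays in a narrow range, which makes the base-$p$ digit analysis uniform.

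The step I expect to be the main obstacle is the Lucas-Theorem digit comparison, i.e. showing that for $a + b \neq n+1$ at least one of the two binomials in $m_{a,b}$ is $\equiv 0 \pmod p$. The subtlety is that $q-1$ is not a power of $p$ (it is $p^r - 1$), so multiplication by $(q-1)$ mixes digits in a way that requires care: one should write $j$ and the multiples of $q-1$ in base $p$, track carries, and exploit that $j + (n-a)(q-1) \leqslant j + j(q-1) = j \cdot q < q^{?}$ to bound the number of base-$p$ digits involved. I would isolate this as a short sublemma ("for $n \leqslant j+1$ and $(a,b)$ off the antidiagonal, $p \mid \binom{j+(n-a)(q-1)}{j+(b-1)(q-1)}\binom{j+(n-a)(q-1)}{j+(n-b)(q-1)}$") and prove it by the digit argument, after which Theorem \ref{ThmAntidiagonal} follows immediately by substituting back into \eqref{spam}. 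Finally I would remark that the antidiagonal entries themselves are already computed in items 4–5 of Section \ref{SecSymmetry} to be $(-1)^{j+2} \neq 0$, so "antidiagonal" is the exact shape, not merely "supported on the antidiagonal."
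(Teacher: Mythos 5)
Your overall strategy coincides with the paper's: start from formula \eqref{spam}, observe that one of the two binomial coefficients in $m_{a,b}$ vanishes outright in $\Z$ on one side of the antidiagonal, and invoke Lucas's Theorem on the other side. Your case bookkeeping (first binomial dies when $b<a$, second when $a+b>n+1$, diagonal reduces to $a\geqslant\frac{n+1}{2}$, symmetries from Section \ref{SecSymmetry} cover the rest) is correct and matches the paper. However, you explicitly defer the entire substance of the argument to an unproven sublemma --- the Lucas digit comparison --- which you yourself call ``the main obstacle,'' and your remarks about needing to ``track carries'' because ``$q-1$ is not a power of $p$'' show that the one idea that makes the proof work is not yet in hand. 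As written, the proposal is a plan with the decisive step missing.

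The missing observation is this: for every binomial appearing in \eqref{spam} the relevant quantities are of the form $j+c(q-1)$ with $c\in\{n-a,\,n-b,\,b-1,\,a-1\}$, and since $1\leqslant a,b\leqslant n$ each such $c$ satisfies $0\leqslant c\leqslant n-1\leqslant j$. Rewriting $j+c(q-1)=cq+(j-c)$, the hypotheses $n\leqslant j+1$ and $j\leqslant q-2$ force $0\leqslant j-c\leqslant q-2<q=p^r$, so \emph{no carries occur}: the base-$p$ expansion of $j+c(q-1)$ is exactly the digits of $j-c$ in positions $0,\dots,r-1$ followed by the digits of $c$ in positions $\geqslant r$. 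Lucas's Theorem then decouples into two independent digit-domination conditions, one on the low block (comparing $j-c$ with $j-c'$) and one on the high block (comparing $c$ with $c'$); since digit-wise domination implies the ordinary inequality, the low-block condition forces an inequality between $a$ and $b$ (or between $a+b$ and $n+1$) \emph{opposite} to the one forced by the high block, and the two are compatible only on the antidiagonal (or at the central diagonal entry). This is precisely how the paper uses $n\leqslant j+1$; without making this splitting explicit, your sublemma is not proved and the theorem does not follow. One further small overstatement: Lucas is needed for roughly half the entries (all those strictly above the antidiagonal), not just in a residual case.
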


\begin{proof}
Thanks to the symmetries and the properties of the matrices $M(j,n,q)$ mentioned above we are going to analyze the general
$b$-th column only for $1\leqslant b \leqslant \frac{n}{2}$ (or $\leqslant \frac{n+1}{2}$ according to the parity of $n$)
and above the antidiagonal (i.e., for $b<n+1-a$).\\
Let us start with the elements on the diagonal. We rewrite them as
\[  m_{a,a}=(-1)^{j+2}\binom{(n-a)q +j +a-n}{(a-1)q+j-a+1}\,.  \]
Our hypotheses on $j$ and $n$ yield $0\leqslant j+a-n,j-a+1 < q=p^r$, hence, in order to use Lemma \ref{KummerThm}, we can write the
$p$-adic expansion of the terms in the binomial coefficient as
\begin{align*}
(n-a)q+j+a-n & =\a_0+\a_1 p+\dots+\a_{r-1}p^{r-1}+(n-a)p^r; \\
(a-1)q+j-a+1 & =\delta_0+\delta_1 p+\dots+\delta_{r-1}p^{r-1}+(a-1)p^r.
\end{align*}
If there exists $i$ such that $\a_i<\delta_i$, then $\binom{\a_i}{\delta_i}=0$ and $m_{a,a}$ is zero. Otherwise, if
for any $i$ one has $\a_i\geqslant\delta_i$, then $j+a-n\geqslant j+1-a$, i.e., $a-1\geqslant n-a$. Again we get $m_{a,a}=0$
unless $a=\frac{n+1}{2}$ where we have already seen that $m_{\frac{n+1}{2},\frac{n+1}{2}}=(-1)^{j+2}$.

The other $m_{a,b}$ are
\[ m_{a,b}= -\left[ \binom{(n-a)q+j+a-n}{(n-b)q+j+b-n} + (-1)^{j+1}\binom{(n-a)q+j+a-n}{(b-1)q+j-b+1}\right] \,.\]
As before, $j+a-n,j-b+1,j+b-n < q$ and all of them are non-negative. Thus, the $p$-adic expansions
of the terms involved in the coefficients are
\begin{align*} (n-a)q+j+a-n & =\a_0+\a_1 p+\dots+\a_{r-1}p^{r-1}+(n-a)p^r; \\
(b-1)q+j-b+1&=\b_0+\b_1 p+\dots+\b_{r-1}p^{r-1}+(b-1)p^r; \\
(n-b)q+j+b-n&=\g_0+\g_1 p+\dots+\g_{r-1}p^{r-1}+(n-b)p^r.
\end{align*}
By Lemma \ref{KummerThm} we have that
\begin{itemize}
\item if there exists $i$ such that $\a_i<\g_i$, then the first binomial coefficient is zero;
\item if there exists $i$ such that $\a_i<\b_i$, then the second binomial coefficient is zero.
\end{itemize}
Otherwise, if for any $i$ one has $\a_i\geqslant\g_i$, then $j+a-n\geqslant j+b-n$ and $n-b\geqslant n-a$.
This implies that the first binomial coefficient is zero
(observe that the equality $a=b$ cannot happen here, the entry $m_{a,a}$ has already been treated).\\
For the second coefficient assume that for any $i$ one has $\a_i\geqslant \b_i$, then
$j+a-n\geqslant j-b+1$, i.e., $b\geqslant n+1-a$, a contradiction to our assumption of being above the antidiagonal.
\end{proof}

\begin{cor}
Let $n\in\N$ and $0\leqslant j\leqslant q-2$. Then, for all $2\leqslant n\leqslant j+1$, the matrix
$M(j,n,q,t)$ is diagonalizable if and only if $q$ is odd.
\end{cor}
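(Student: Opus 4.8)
The plan is to leverage Theorem~\ref{ThmAntidiagonal}, which tells us that for $2\leqslant n\leqslant j+1$ the matrix $M(j,n,q)$ --- and hence $M(j,n,q,t)$, which differs from it only by multiplying the entries by powers of $t$ --- is antidiagonal. So first I would pin down the antidiagonal entries of $M(j,n,q)$. Since the matrix is antidiagonal we have $m_{a,a}=0$ for every $a\neq\frac{n+1}{2}$, and then the symmetry between diagonal and antidiagonal (property 2 of Section~\ref{SecSymmetry}) gives $m_{a,n+1-a}=(-1)^{j+1}(m_{a,a}-1)=(-1)^{j+2}$ for those $a$; for the central entry when $n$ is odd, property 3 gives $m_{\frac{n+1}{2},\frac{n+1}{2}}=(-1)^{j+2}$ as well (consistently with properties 4 and 5). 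Thus \emph{every} antidiagonal entry of $M(j,n,q)$ equals $(-1)^{j+2}=\pm1$, and by \eqref{EqCoeffMj} the $(a,n+1-a)$-entry of $M(j,n,q,t)$ is $\pm t^{\,j+1+(n-a)(q-1)}$, in particular nonzero, for every $a$.

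Next I would reorganize the ordered basis of $C_j$ by pairing $\c_{j+(a-1)(q-1)}(\e)$ with $\c_{j+(n-a)(q-1)}(\e)$ for $1\leqslant a\leqslant\lfloor n/2\rfloor$ and leaving the central vector $\c_{j+\frac{n-1}{2}(q-1)}(\e)$ on its own when $n$ is odd. In this basis $M(j,n,q,t)$ becomes a direct sum of $2\times2$ antidiagonal blocks $\bigl(\begin{smallmatrix}0&c\\c'&0\end{smallmatrix}\bigr)$ with $c,c'\in\C_\infty^*$, plus at most one $1\times1$ block (occurring exactly for $n$ odd). Since a block-diagonal matrix is diagonalizable if and only if each of its blocks is, and since $n\geqslant2$ forces the presence of at least one genuine $2\times2$ block, the corollary reduces to the diagonalizability of such a $2\times2$ block.

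Finally I would treat $\bigl(\begin{smallmatrix}0&c\\c'&0\end{smallmatrix}\bigr)$ directly: its characteristic polynomial is $X^2-cc'$ with $cc'\neq0$, and over the algebraically closed field $\C_\infty$ one may choose $s$ with $s^2=cc'$. If $q$ is odd then $s\neq-s$, so $X^2-cc'=(X-s)(X+s)$ has distinct roots and the block is diagonalizable; running this over all blocks shows $M(j,n,q,t)$ is diagonalizable. If $q$ is even then $X^2-cc'=(X-s)^2$ has a repeated root while the block is not scalar (its off-diagonal entries are nonzero), so the block, and therefore $M(j,n,q,t)$, fails to be diagonalizable.

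The only genuinely fiddly step is the first one --- reading off the antidiagonal entries from the symmetry relations together with \eqref{EqCoeffMj} and confirming that they are units times powers of $t$, hence nonzero; once that is in place the rest is the routine observation that an invertible antidiagonal matrix is, after a permutation of the basis, a direct sum of $2\times2$ antidiagonal blocks, which are diagonalizable precisely away from characteristic $2$.
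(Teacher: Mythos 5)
Your proof is correct and follows essentially the same route as the paper: invoke Theorem~\ref{ThmAntidiagonal}, observe via the symmetry relations that every antidiagonal entry of $M(j,n,q,t)$ is $(-1)^{j+2}$ times a power of $t$ (hence nonzero, with paired entries multiplying to $t^k$), and conclude by analyzing the resulting $2\times 2$ antidiagonal blocks, whose characteristic polynomials $X^2-t^k$ have distinct roots exactly when $q$ is odd. The paper states this more tersely by writing the characteristic polynomial $(X^2-t^k)^{n/2}$ (with the extra linear factor for odd $n$) directly, but the underlying argument is the same.
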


\begin{proof}
Putting back the powers of $t$ in the antidiagonal matrix $M(j,n,q)$, one gets
\[ M(j,n,q,t)=(-1)^{j+2}\left(\begin{array} {ccccc} 0 & 0 & \cdots & 0 & t^{j+1+(n-1)(q-1)} \\
\vdots & \vdots & \cdots & t^{j+1+(n-2)(q-1)} & 0 \\
\vdots & \vdots & \udots & 0 & \vdots \\
0 & t^{j+1+(q-1)} & \cdots & \vdots & \vdots \\
t^{j+1} & 0 & \cdots & 0 & 0 \end{array} \right) \,.\]
Recalling that $k=2j+2+(n-1)(q-1)$, these matrices have characteristic polynomial
\[ \det(M(j,n,q,t)-X\!\!\cdot{\bf Id}_n)=\left\{ \begin{array}{ll} (X^2-t^k)^{\frac{n}{2}} & {\rm if}\ n\ {\rm is\ even} \\
\ & \\
(X^2-t^k)^{\frac{n-1}{2}}(-X+(-1)^{j+2} t^{\frac{k}{2}}) & {\rm if}\ n\ {\rm is\ odd} \end{array} \right. \,,\]
and they are all diagonalizable unless $q$ is even.
\end{proof}

\begin{rem}
Since $n\geqslant 2$, our hypotheses on $j$ being greater or equal to $n-1$ implies that in the class $C_j$ we will
never find $\c_0$. Moreover $\c_{k-2}=\c_{2j+(n-1)(q-1)}\in C_j$ if and only if $2j\equiv j\pmod{q-1}$, i.e., $j=0$.
This means that if we consider double cusp forms, the corresponding matrices for the particular blocks considered
in Theorem \ref{ThmAntidiagonal} are exactly the same.
\end{rem}

\begin{exe} \label{Exn=j+2o3}
{\em The following matrices show that the bound in Theorem \ref{ThmAntidiagonal} is sharp. For $q=8$, $j=3, 6$ we have }
\[ M(3,5,8)=\left( \begin{array}{ccccc}
1 & 0 & 0 & 0 & 0 \\
0 & 0 & 0 & 1 & 0 \\
0 & 0 & 1 & 0 & 0 \\
0 & 1 & 0 & 0 & 0 \\
1 & 0 & 0 & 0 & 0 \end{array}\right) \ {\rm and} \
M(6,8,8)=\left(\begin{array}{cccccccc} 1 & 1 & 1 & 1 & 1 & 1 & 1 & 0\\
0 & 0 & 0 & 0 & 0 & 0 & 1 & 0\\
0 & 0 & 0 & 0 & 0 & 1 & 0 & 0\\
0 & 0 & 0 & 0 & 1 & 0 & 0 & 0\\
0 & 0 & 0 & 1 & 0 & 0 & 0 & 0\\
0 & 0 & 1 & 0 & 0 & 0 & 0 & 0\\
0 & 1 & 0 & 0 & 0 & 0 & 0 & 0\\
1 & 0 & 0 & 0 & 0 & 0 & 0 & 0 \end{array}\right)\,.\]
{\em Another example in odd characteristic: for $q=9$ and $j=3$ we have }
 \[ M(3,5,9)=\left(\begin{array}{ccccc}
1 & -1 & 0 & -1 & 0 \\
0 & 0 & 0 & -1 & 0 \\
0 & 0 & -1 & 0 & 0 \\
0 & -1 & 0 & 0 & 0 \\
-1 & 0 & 0 & 0 & 0
\end{array}\right)\,.\]
{\em We shall deal with the $n=j+2$ case in Theorem \ref{Thmn=j+2}, we shall also provide all details on $M(3,5,9,t)$ in
Example \ref{Ex359}.}
\end{exe}

\subsubsection{Antidiagonal blocks and newforms}\label{SecAntidiagonalNewforms}
The bound for Theorem \ref{ThmAntidiagonal} basically determines the maximal $k$ for which $\dim(S^1_{k,m}(\G_0(1)))=0$.
To see this, with notations as in \cite{Cor}, consider the modular form $g\in M_{q-1,0}(GL_2(A))$ and the cusp form
$h\in M_{q+1,1}(GL_2(A))$ which generate $M_{k,m}(GL_2(A))$, i.e., such that
$M_{k,m}(GL_2(A))\simeq \C_\infty [g,h]$ (see \cite[Proposition 4.6.1]{Cor}) where the polynomial ring
is intended doubly graded by weight and type.
When $n\leqslant j+1< q-1$, using \cite[Proposition 4.3]{Cor} and denoting by $\lfloor \cdot \rfloor$ the floor function,
one gets
\begin{align*} \dim_{\C_\infty} M_{k,m}(GL_2(A)) & = 1+\left\lfloor \frac{k-(j+1)(q+1)}{q^2-1} \right\rfloor \\
 & = 1+\left\lfloor \frac{n-2-j}{q+1} \right\rfloor\,.
 \end{align*}
Since $-1 < \frac{n-2-j}{q+1} <0$, $ \dim_{\C_\infty} M_{k,m}(GL_2(A))=0$. \\
When $n\leqslant j+1$ and $j+1=q-1$
\begin{align*}
\dim_{\C_\infty} M_{k,m}(GL_2(A)) & = 1+ \left\lfloor \frac{k}{q^2-1} \right\rfloor\,.
\end{align*}
Seeing that $0<\frac{k}{q^2-1}=\frac{n+1}{q+1}<1$, we end up with a one dimensional space generated by $g^{q}$
(obviously not cuspidal).\\
It remains to show that such a $k$ is maximal. For this just consider $n=j+2$. With computations as above it is easy
to see that:
\begin{enumerate}
\item if $j+1<q-1$, then $\dim_{\C_\infty} M_{k,m}(GL_2(A))=1$ generated by $\{h^{j+1}\}$;
\item if $j+1=q-1$, then $\dim_{\C_\infty} M_{k,m}(GL_2(A))=2$ generated by $\{h^{j+1},g^{q+1} \}$.
\end{enumerate}

Anyway it is quite easy to prove that all the eigenforms involved in an antidiagonal block are newforms (we mention this because it
holds even if the whole matrix is not antidiagonal). Indeed the Fricke action on cocycles (for any $i$) is given by
\begin{align*}
\c_i^{Fr}(\e)(X^\ell Y^{k-2-\ell}) & = {\matrix{0}{-1}{t}{0}}^{-1}\c_i
\left(\matrix{0}{-1}{t}{0}\matrix{0}{1}{1}{0}\right)(X^\ell Y^{k-2-\ell}) \\
\ & = {\matrix{0}{\frac{1}{t}}{-1}{0}}\c_i
\left(\matrix{-1}{0}{0}{1}\matrix{1}{0}{0}{t}\right)(X^\ell Y^{k-2-\ell}) \\
\ & = {\matrix{0}{\frac{1}{t}}{1}{0}}\c_i(e)(X^\ell Y^{k-2-\ell})
= (-1)^m t^{m-1-\ell} \c_i(\e) (X^{k-2-\ell}Y^\ell)  \,.
\end{align*}
So
\begin{equation}\label{EqFrCoc}
\c_i^{Fr}=(-1)^mt^{i+1+m-k} \c_{k-2-i}
\end{equation}
(note that $\c_i$ and $\c_{k-2-i}$ correspond to ``symmetric'' columns in the matrices above).

Recall that to use our formulas for $U_t$ on cocycles in the setting of Section \ref{SecSlopes} we need
to divide by $t^{k-m}$. Therefore equations \eqref{EqTr} and \eqref{EqTr'} translate into
\begin{equation}\label{EqTrCoc}
Tr(\c_i)=\c_i+\frac{t^{k-2m}}{t^{k-m}}U_t(\c_i^{Fr})=\c_i+(-1)^m t^{i+1-k}U_t(\c_{k-2-i})
\end{equation}
and
\begin{equation}\label{EqTr'Coc}
Tr'(\c_i)=\c_i^{Fr}+\frac{1}{t^{k-m}}U_t(\c_i)=(-1)^m t^{i+1+m-k}\c_{k-2-i}+t^{m-k}U_t(\c_i)\,.
\end{equation}

Now move to our matrices $M(j,n,q,t)$ with $k=2j+2+(n-1)(q-1)$ and $n\leqslant j+1$. In this antidiagonal setting
one has, for $1\leqslant h\leqslant n$,
\[ U_t(\c_{j+(h-1)(q-1)})= (-1)^{j+2}t^{j+1+(h-1)(q-1)}\c_{k-2-j-(h-1)(q-1)} \,.\]
Then
\begin{align*}
Tr(\c_{j+(h-1)(q-1)}) & =\c_{j+(h-1)(q-1)} \\
\ & +(-1)^m t^{j+(h-1)(q-1)+1-k}(-1)^{j+2}t^{k-2-j-(h-1)(q-1)+1}\c_{k-2-k+2+j+(h-1)(q-1)}  \\
\ & = \c_{j+(h-1)(q-1)}(1+(-1)^{m+j+2})=0\,,
\end{align*}
where the last equality comes from the fact that $j\equiv m-1 \pmod{q-1}$. Formula \eqref{EqFrCoc} readily implies
$Tr'(\c_{j+(h-1)(q-1)})=0$ as well, therefore all these cocycles are newforms (in general, whenever one has an
antidiagonal block inside some $M(j,n,q,t)$ the associated cocycles are newforms). Obviously the eigenvectors
are newforms too: they are of the form
\[ (-1)^{j+1}t^{\frac{k}{2}-j-(h-1)(q-1)-1}\c_{j+(h-1)(q-1)}\pm \c_{k-2-j-(h-1)(q-1)} \]
if $n$ is even, when $n$ is odd one simply adds $\c_{\frac{k-2}{2}}$.

\subsection{Diagonalizability of $M(j,n,q)$: case $j=0$}
The previous Theorem \ref{ThmAntidiagonal} leaves out many cases especially for small values of $j$ because of the
bound $n\leqslant j+1$.\\
For the particular case of $j=0$ (where the above theorem does not apply at all) we have, for example,
\[ M(0,4,q)=\left(\begin{array}{cccc} 1 & 0 & 0 & 0 \\
1 & 0 & 1 & 1\\
1 & 1 & 0 & 1\\
1 & 0 & 0 & 0 \end{array}\right) \quad{\rm for\ any\ even\ } q  \]
and
\[ M(0,6,q)=\left(\begin{array}{cccccc} 1 & 0 & 0 & 0 & 0 & 0\\
1 & 0 & 0 & 0 & 1 & 1\\
1 & 0 & 0 & 1 & 0 & 1\\
1 & 0 & 1 & 0 & 0 & 1\\
1 & 1 & 0 & 0 & 0 & 1\\
1 & 0 & 0 & 0 & 0 & 0
\end{array}\right) \quad{\rm for\ any\ even\ }q\geqslant 4 \,.\]

Other numerical computations led us to think that this could be generalized in the following

\begin{thm}\label{Thmj=0}
Let $n\in\N$ with $n\geqslant 2$ and $j=0$. Then, for all $n\leqslant q+2$,
the matrix $M(0,n,q)$ has the following entries
\begin{enumerate}
\item {$m_{a,1}=1$ for $1\leqslant a\leqslant n$;}
\item {$m_{a,b}=0$ for $1\leqslant a\leqslant n-2$, $2\leqslant b\leqslant \frac{n}{2}$ (or $\frac{n+1}{2}$
depending on the parity of $n$) and $b<n+1-a$,}
\end{enumerate}
i.e.,
\[ M(0,n,q)=\left( \begin{array}{cccccc}
1 & 0 & \cdots & \cdots & 0 & 0 \\
1 & 0 & \cdots & 0 & 1 & -1 \\
\vdots & \vdots & \ & \udots & 0 & \vdots \\
\vdots & 0 & \udots &  & \vdots & \vdots \\
1 & 1 & 0 & \cdots & 0 & -1 \\
1 & 0 & \cdots & \cdots & 0 & 0 \end{array}\right) \ \footnote{Note
the central antidiagonal block which has to come from newforms as
mentioned above.}\,.\]
\end{thm}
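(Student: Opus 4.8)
The plan is to read off the two claimed families of entries directly from the explicit formula \eqref{spam}, reducing everything to the $p$-adic digit combinatorics governed by Lemma~\ref{KummerThm}. For $j=0$, \eqref{spam} specialises to $m_{a,a}=\binom{(n-a)(q-1)}{(a-1)(q-1)}$ on the diagonal and, for $a\neq b$, to $m_{a,b}=\binom{(n-a)(q-1)}{(b-1)(q-1)}-\binom{(n-a)(q-1)}{(n-b)(q-1)}$; thus the whole statement concerns binomial coefficients of the shape $\binom{N(q-1)}{L(q-1)}$.

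First I would isolate the arithmetic core as a lemma: if $q=p^r$ and $1\le L<N\le q$, then $\binom{N(q-1)}{L(q-1)}\equiv 0\pmod p$. This rests on the identity $N(q-1)=(N-1)q+(q-N)$, in which both $N-1$ and $q-N$ lie in $\{0,\dots,q-1\}$: hence the base-$p$ expansion of $N(q-1)$ is the concatenation of the $r$ digits of $q-N$ (low block) with the $r$ digits of $N-1$ (high block), with no carry crossing between the two blocks, and likewise for $L(q-1)$. Since $L<N$ forces $q-L>q-N$ with both quantities $<q$, the low block of $L(q-1)$ cannot be coordinate-wise $\le$ the low block of $N(q-1)$, so Lemma~\ref{KummerThm} makes the coefficient vanish mod $p$; the case $N=q$ is the same statement with $q-N=0$.

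Granting this, claim (1) is immediate: $m_{1,1}=\binom{(n-1)(q-1)}{0}=1$, and for $2\le a\le n$ one has $m_{a,1}=\binom{(n-a)(q-1)}{0}-\binom{(n-a)(q-1)}{(n-1)(q-1)}=1-0$, the second term vanishing because $(n-a)(q-1)<(n-1)(q-1)$. For claim (2) I would split on $a=1$ versus $a\ge 2$. If $a=1$, the elementary symmetry $\binom{x}{y}=\binom{x}{x-y}$ gives $\binom{(n-1)(q-1)}{(b-1)(q-1)}=\binom{(n-1)(q-1)}{(n-b)(q-1)}$ (using $b\le n-1$), so $m_{1,b}=0$ with no residue computation at all; this is precisely what neutralises the otherwise awkward boundary case $n=q+2$. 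If $2\le a\le n-2$, set $N=n-a$, so that $2\le N\le q$ — this is the only point where the hypothesis $n\le q+2$ is used. On the diagonal, $b<n+1-a$ forces $2a\le n$, i.e. $1\le a-1<N$, and the lemma gives $m_{a,a}=\binom{N(q-1)}{(a-1)(q-1)}=0$. Off the diagonal, $b<n+1-a$ gives $1\le b-1<N$, so the first binomial vanishes by the lemma; for the second, $\binom{N(q-1)}{(n-b)(q-1)}$ vanishes by the lemma when $a<b$ (then $1\le n-b<N$) and trivially when $a>b$ (then $n-b>N$). Hence $m_{a,b}=0$. (In fact the argument uses only $b<n+1-a$, not the sharper bound $b\le\frac{n}{2}$.)

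Finally, the displayed form of $M(0,n,q)$ follows by combining (1) and (2) with the symmetry relations of Section~\ref{SecSymmetry} — which produce the last column (via $m_{a,n}=-m_{a,1}$), the antidiagonal band, and the vanishing below the antidiagonal — together with the one-line direct evaluations $m_{1,n}=m_{n,n}=0$ and $m_{n,b}=0$ for $1<b<n$. I do not expect a genuine obstacle: the only delicate point is the boundary case $n=q+2$, $a=1$, where the two binomials are individually nonzero mod $p$ yet coincide by $\binom{x}{y}=\binom{x}{x-y}$, so it is a matter of bookkeeping rather than a real difficulty.
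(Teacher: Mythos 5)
Your proposal is correct and follows essentially the same route as the paper: specialize formula \eqref{spam} to $j=0$, kill the first row by the symmetry $\binom{x}{y}=\binom{x}{x-y}$, compute the first column directly, and reduce all interior entries to Lucas's theorem via the carry-free base-$p$ decomposition $N(q-1)=(N-1)q+(q-N)$, with the remaining entries supplied by the symmetry relations of Section~\ref{SecSymmetry}. Your only departure is cosmetic: you extract the digit argument as a clean standalone lemma on $\binom{N(q-1)}{L(q-1)}$ for $1\leqslant L<N\leqslant q$, whereas the paper runs the same comparison of low-order digits inline for each family of entries.
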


\begin{proof}
Directly from Section \ref{SecSymmetry} (in particular statements 3, 4, 5, 6 and 7) we have
\begin{itemize}
\item $m_{a,n+1-a}=1$ (the antidiagonal), for $\frac{n}{2}+1\leqslant a$ (or $\frac{n+1}{2}\leqslant a$);
\item $m_{a,b}=0$, for $b> n+1-a$, $\frac{n}{2}+1\leqslant a$ (or $\frac{n+1}{2}\leqslant a$) (below the antidiagonal);
\item if $n$ is odd, $m_{a,\frac{n+1}{2}}=0$ for any $a\neq \frac{n+1}{2}$ (central column).
\end{itemize}
Hence we limit ourselves to elements above the antidiagonal in the first $\frac{n}{2}$ (or $\frac{n-1}{2}$) columns.

In the first column, the elements $m_{a,1}$, $1\leqslant a\leqslant n-1$ are the following
\begin{itemize}
\item $m_{1,1}=\binom{(n-1)(q-1)}{0}=1$;
\item for any $a\geqslant 2$, $m_{a,1}=-\left[\binom{(n-a)(q-1)}{(n-1)(q-1)}-\binom{(n-a)(q-1)}{0}\right]=1$.
\end{itemize}

In the first row the elements $m_{1,b}$, $2\leqslant b\leqslant \frac{n}{2}$ (or $\leqslant \frac{n-1}{2}$) are
\[ m_{1,b}=-\left[ \binom{(n-1)(q-1)}{(n-b)(q-1)}-\binom{(n-1)(q-1)}{(b-1)(q-1)}\right]=0\,.\]

We are now going to check the entries $m_{a,b}$ with $2\leqslant a\leqslant n-2$, $b< n+1-a$ and
$2\leqslant b\leqslant \frac{n}{2}$ (or $\leqslant \frac{n-1}{2}$). We begin with elements on the diagonal.
Write $m_{a,a}$ as
\[ m_{a,a}=\binom{(n-a)(q-1)}{(a-1)(q-1)}=\binom{(n-a-1)q+q+a-n}{(a-2)q+q-a+1}\,.\]
By our hypotheses on $n$ and $q$ and our current bounds on $a$, we have $0\leqslant q+a-n,q+1-a <q$. Hence
we can write the $p$-adic expansions as
\begin{align*}
(n-a-1)q+q+a-n & = \a_0+\a_1 p+\dots+\a_{r-1}p^{r-1}+ (n-a-1)p^r\\
(a-2)q+q-a+1& = \b_0+\b_1 p+\dots+\b_{r-1}p^{r-1}+(a-2)p^r\,.
\end{align*}
If there exists $i$ such that $\a_i<\b_i$, then $m_{a,a}=0$. If for any $i$ one has $\a_i\geqslant\b_i$,
then $q+a-n\geqslant q-a+1$ and this means that $a\geqslant \frac{n+1}{2}$. But this is outside our range
(and we already know $m_{\frac{n+1}{2},\frac{n+1}{2}}=1$), hence $m_{a,a}=0$ in the first columns.\\
Outside the diagonal we write the general entry as
\[ m_{a,b}=-\left[\binom{(n-a-1)q+q+a-n}{(n-b-1)q+q+b-n}-\binom{(n-a-1)q+q+a-n}{(b-2)q+q-b+1}\right]\,. \]
As before the hypotheses on $n$ and $q$ and the current bounds on $a$ and $b$ allow us to write the $p$-adic expansions as
\begin{align*}
(n-a-1)q+q+a-n & =\a_0+\a_1 p+\dots+\a_{r-1}p^{r-1}+(n-a-1)p^r;\\
(b-2)q+q-b+1 & = \g_0+\g_1 p+\dots+\g_{r-1}p^{r-1}+(b-2)p^r;\\
(n-b-1)q+q+b-n & = \delta_0+\delta_1 p+\dots+\delta_{r-1}p^{r-1}+(n-b-1)p^r\,.
\end{align*}
If there exists $i$ such that $\a_i<\g_i$, then the first binomial coefficient is zero.
If for any $i$ one has $\a_i\geqslant\g_i$, then $a+q-n\geqslant q-b+1$, i.e., $b\geqslant n+1-a$
which is outside our current range.\\
If there exists $i$ such that $\a_i<\delta_i$, then the second binomial coefficient is zero.
If for any $i$ one has $\a_i\geqslant\delta_i$, then $q+a-n\geqslant q+b-n$, i.e., $a\geqslant b$
and so $(n-a)(q-1) < (n-b)(q-1)$. Therefore, the second binomial coefficient is 0 as well.
\end{proof}

\begin{cor}
Let $n\in\N$ and $j=0$. Then, for all $2\leqslant n\leqslant q+2$, the slopes are 1 and $\frac{k}{2}$.
Moreover the matrix $M(0,n,q,t)$ is diagonalizable if and only if $q$ is odd or $q$ is even and $n\leqslant 3$.
\end{cor}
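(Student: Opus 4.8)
The plan is to feed the explicit shape of $M(0,n,q)$ from Theorem \ref{Thmj=0} into \eqref{EqCoeffMj}, read off the characteristic polynomial, and then settle diagonalizability by examining one distinguished invariant subspace. Write $e_1,\dots,e_n$ for the ordered basis $\c_0(\e),\c_{q-1}(\e),\dots,\c_{(n-1)(q-1)}(\e)$ of $C_0$, let $V$ be the $n$-dimensional space they span, and recall $k=2+(n-1)(q-1)$. By \eqref{EqCoeffMj} the $b$-th column of $M(0,n,q,t)$ is the $b$-th column of $M(0,n,q)$ multiplied by $t^{1+(b-1)(q-1)}$ (for $j=0$ the diagonal exponent $(-1)^{1+(a-1)(q-1)}$ equals $-1$, a sign already incorporated into \eqref{spam}). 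Hence Theorem \ref{Thmj=0} says: the first column is $t(e_1+e_2+\dots+e_n)$; for $2\leqslant b\leqslant n-1$ the $b$-th column is $t^{1+(b-1)(q-1)}e_{n+1-b}$; and the last column is $-t^{k-1}(e_2+e_3+\dots+e_{n-1})$.

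The key observation is that $W:=\langle e_2,\dots,e_{n-1}\rangle$ is stable under $U_t$: the middle columns land in $W$ by construction, and so does the last one. On $W$ the operator is antidiagonal, and the product of the entries in columns $b$ and $n+1-b$ is $t^{1+(b-1)(q-1)}\cdot t^{1+(n-b)(q-1)}=t^{k}$ (the central entry, when $n$ is odd, being $t^{k/2}$); hence $\big({U_t}_{|W}\big)^2=t^k\cdot\mathrm{Id}_W$, and the characteristic polynomial of ${U_t}_{|W}$ is $(X^2-t^k)^{(n-2)/2}$ for $n$ even and $(X^2-t^k)^{(n-3)/2}(X-t^{k/2})$ for $n$ odd. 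On $V/W$ the classes of $e_1,e_n$ satisfy $U_te_1\equiv te_1+te_n \pmod W$ and $U_te_n\equiv 0 \pmod W$, so the induced operator is $\bigl(\begin{smallmatrix}t&0\\t&0\end{smallmatrix}\bigr)$ with characteristic polynomial $X(X-t)$. Multiplying, the characteristic polynomial of $M(0,n,q,t)$ is $X(X-t)(X^2-t^k)^{(n-2)/2}$ for $n$ even and $X(X-t)(X^2-t^k)^{(n-3)/2}(X-t^{k/2})$ for $n$ odd; in particular the nonzero eigenvalues are $t$ (slope $1$) and, when $n\geqslant 3$, $\pm t^{k/2}$ (slope $k/2$).

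For diagonalizability, the vanishing of $X(X-t)$ on the operator induced on $V/W$ gives $U_t(U_t-t\cdot\mathrm{Id})V\subseteq W$, and since $U_t^2-t^k$ annihilates $W$ the polynomial $g(X)=X(X-t)(X^2-t^k)$ annihilates $M(0,n,q,t)$. If $q$ is odd, $g$ has the four distinct roots $0,t,\pm t^{k/2}$ (distinct because $t\neq 0$, $t^{k/2}\neq\pm t$ since $k>2$, and $t^{k/2}\neq -t^{k/2}$ in odd characteristic), hence $U_t$ is diagonalizable. If $q$ is even and $n\leqslant 3$, the characteristic polynomial already has distinct roots --- $\{0,t\}$ for $n=2$, and $\{0,t,t^{k/2}\}$ for $n=3$ (where $W$ is one-dimensional and spanned by an eigenvector of eigenvalue $t^{k/2}=t^q$) --- so $U_t$ is diagonalizable. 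If $q$ is even and $n\geqslant 4$, then $\dim W=n-2\geqslant 2$ and ${U_t}_{|W}$ is a non-scalar antidiagonal matrix with $\big({U_t}_{|W}\big)^2=t^k\cdot\mathrm{Id}_W=(t^{k/2}\,\mathrm{Id}_W)^2$; in characteristic $2$ this forces its minimal polynomial to be $(X-t^{k/2})^2$, so ${U_t}_{|W}$ is not diagonalizable, and therefore neither is $M(0,n,q,t)$, since a diagonalizable operator restricts to a diagonalizable operator on every invariant subspace.

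I expect the only real work to be careful bookkeeping: pinning down the power of $t$ and the sign attached to each column of $M(0,n,q,t)$ so that the antidiagonal pairing on $W$ yields exactly $t^k$, and treating separately the small cases $n=2,3$, where $W$ is too small for the characteristic-$2$ obstruction to occur. No binomial-coefficient analysis beyond Theorem \ref{Thmj=0} is required.
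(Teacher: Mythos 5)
Your proof is correct and follows essentially the same route as the paper: both hinge on splitting off the central antidiagonal block $W=\langle e_2,\dots,e_{n-1}\rangle$ (on which $U_t^2=t^k\,\mathrm{Id}$) from the first and last basis vectors, which yields the characteristic polynomial $(X^2-tX)(X^2-t^k)^{\lfloor (n-2)/2\rfloor}$ times $(X-t^{k/2})$ for odd $n$. The only cosmetic difference is that for odd $q$ you conclude diagonalizability from the annihilating polynomial $X(X-t)(X^2-t^k)$ having distinct roots, whereas the paper points to the explicit old/new eigenvectors of Section \ref{SecAntidiagonalNewforms}; your handling of even $q$, $n\geqslant 4$ via the non-diagonalizable restriction to $W$ is a clean justification of what the paper leaves implicit.
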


\begin{proof}
Putting back the powers of $t$ in the coefficient matrix above one has
\[ \det(M(0,n,q,t)-X\!\!\cdot{\bf Id}_n)=\left\{ \begin{array}{ll} (X^2-tX)(X^2-t^k)^{\frac{n}{2}-1} & {\rm if}\ n\ {\rm is\ even} \\
\ & \\
(X^2-tX)(X^2-t^k)^{\frac{n-3}{2}}(-X+t^{\frac{k}{2}}) & {\rm if}\ n\ {\rm is\ odd} \end{array} \right. \,,\]
i.e., slopes are $1=j+1$ and $\frac{k}{2}$. Diagonalizability basically depends on the central antidiagonal block, so
the final statement is straightforward: as seen in Section \ref{SecAntidiagonalNewforms}, for $q$ odd we have $n-2$ newforms
of eigenvalues $\pm t^{\frac{k}{2}}$ and 2 oldforms of eigenvalues 0 and $t$.
\end{proof}

\begin{rem}
We observe that in the class $C_0$ we will always find the cocycles $\c_0$ and $\c_{k-2}=\c_{(n-1)(q-1)}$. Deleting
first and last columns and first and last rows of the matrix in Theorem \ref{Thmj=0} we will get the corresponding
matrix for double cusp forms, which is antidiagonal.
\end{rem}

\begin{exe}
{\em The following matrices show that the bound on $n$ is as accurate as possible.
Indeed, for $j=0$ and $n=q+3$ the form is different from the one in the above theorem.
\[ M(0,5,2)=\left( \begin{array}{ccccc}
1 & 0 & 0 & 0 & 0\\
1 & 1 & 0 & 0 & 1\\
1 & 0 & 1 & 0 & 1\\
1 & 1 & 0 & 0 & 1\\
1 & 0 & 0 & 0 & 0
\end{array}\right) \quad
M(0,7,4)=\left( \begin{array}{ccccccc}
1 & 0 & 0 & 0 & 0 & 0 & 0 \\
1 & 1 & 0 & 0 & 0 & 0 & 1 \\
1 & 0 & 0 & 0 & 1 & 0 & 1 \\
1 & 0 & 0 & 1 & 0 & 0 & 1 \\
1 & 0 & 1 & 0 & 0 & 0 & 1 \\
1 & 1 & 0 & 0 & 0 & 0 & 1 \\
1 & 0 & 0 & 0 & 0 & 0 & 0
\end{array}\right)\]
\[ M(0,6,3)=\left(\begin{array}{cccccc}
1 & 0 & 0 & 0 & 0 & 0\\
1 & 1 & 0 & 0 & 0 & -1\\
1 & 0 & 0 & 1 & 0 & -1\\
1 & 0 & 1 & 0 & 0 & -1\\
1 & 1 & 0 & 0 & 0 & -1\\
1 & 0 & 0 & 0 & 0 & 0\\
\end{array}\right) \quad
M(0,12,9)= \left(\begin{array}{cccccccccccc}
1 & 0 & 0 & 0 & 0 & 0 & 0 & 0 & 0 & 0 & 0 & 0\\
1 & 1 & 0 & 0 & 0 & 0 & 0 & 0 & 0 & 0 & 0 & -1\\
1 & 0 & 0 & 0 & 0 & 0 & 0 & 0 & 0 & 1 & 0 & -1\\
1 & 0 & 0 & 0 & 0 & 0 & 0 & 0 & 1 & 0 & 0 & -1\\
1 & 0 & 0 & 0 & 0 & 0 & 0 & 1 & 0 & 0 & 0 & -1\\
1 & 0 & 0 & 0 & 0 & 0 & 1 & 0 & 0 & 0 & 0 & -1\\
1 & 0 & 0 & 0 & 0 & 1 & 0 & 0 & 0 & 0 & 0 & -1\\
1 & 0 & 0 & 0 & 1 & 0 & 0 & 0 & 0 & 0 & 0 & -1\\
1 & 0 & 0 & 1 & 0 & 0 & 0 & 0 & 0 & 0 & 0 & -1\\
1 & 0 & 1 & 0 & 0 & 0 & 0 & 0 & 0 & 0 & 0 & -1\\
1 & 1 & 0 & 0 & 0 & 0 & 0 & 0 & 0 & 0 & 0 & -1\\
1 & 0 & 0 & 0 & 0 & 0 & 0 & 0 & 0 & 0 & 0 & 0
\end{array}\right)\,.\]
Adding the powers of $t$, they all give rise to diagonalizable matrices, except $M(0,7,4)$ because of the antidiagonal
block of dimension 3.}
\end{exe}

There seems to be a pattern for matrices of type $M(0,q+3,q)$ but this kind of case by case analysis does not seem to
be useful for large values of $k$. We give a final example in the next section and then provide (in Section \ref{SecTables})
some results arising from a more extensive computer search on slopes and eigenvalues.

\subsection{The case $n=j+2$ and diagonalizability of matrices of dimension at most 4}\label{Sec=j+2&j+3}
We put here one more result which generalizes the matrices appearing in Example \ref{Exn=j+2o3}.
We decided to add this case because the form of the matrix is still acceptable (and the characteristic polynomial
is easy to compute) and, together with the previous results and the final example of $M(1,4,q,t)$ (see below),
will establish the $n=4$ case (for which the indices $j=1,2$ fell outside the bounds of the previous theorems).

\begin{thm}\label{Thmn=j+2}
Let $j\geqslant 2$ be even and let $n=j+2$, then the matrix has the form
\[ M(j,j+2,q)=\left( \begin{array}{cccccc}
1 & m_{1,2} & \cdots & \cdots & (-1)^{j+1}m_{1,2} & 0 \\
0 & 0 & \cdots & 0 & (-1)^{j+2} & \vdots \\
\vdots & \vdots & \ & \udots & 0 & \vdots \\
\vdots & 0 & \udots &  & \vdots & \vdots \\
0 & (-1)^{j+2} & 0 & \cdots & 0 & \vdots \\
(-1)^{j+2} & 0 & \cdots & \cdots & 0 & 0 \end{array}\right) \]
and $M(j,j+2,q,t)$ has eigenvalues 0, $t^{j+1}$ and $\pm t^{\frac{k}{2}}$.
\end{thm}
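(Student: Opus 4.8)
The plan is to read off everything from the entry formula \eqref{spam} together with the symmetry relations 1--7 of Section \ref{SecSymmetry}, and then to pin down the remaining entries with Lucas's Theorem (Lemma \ref{KummerThm}). Since $j$ is even, $n=j+2$ is even, so there is no central column, and items 4 and 6 (the even-$n$ statements) already give $m_{a,n+1-a}=(-1)^{j+2}$ on the antidiagonal and $m_{a,b}=0$ strictly below it for $\tfrac n2+1\leqslant a\leqslant n$; via the column symmetry 1 and the diagonal/antidiagonal symmetry 2 these two facts will propagate to the whole region on and below the antidiagonal once we know the diagonal vanishes for $2\leqslant a\leqslant \tfrac n2$. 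Hence it remains only to evaluate (i) the first column, and (ii) the entries strictly above the antidiagonal in rows $2\leqslant a\leqslant n-1$; the first-row entries $m_{1,b}$, $2\leqslant b\leqslant\tfrac n2$, may be left as unknowns, as they will turn out not to affect the characteristic polynomial.

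For (i) and (ii) I would use $j=n-2\leqslant q-2<q$ to write, for $2\leqslant a\leqslant n-1$, $\;j+(n-a)(q-1)=(n-a)q+(a-2)$, and likewise $j+(n-b)(q-1)=(n-b)q+(b-2)$ for $b\geqslant 2$ and $j+(b-1)(q-1)=(b-1)q+(n-b-1)$ for $b\leqslant n-1$ (with the obvious modifications $j+(n-1)(q-1)=(n-2)q+(q-1)$ and $j+0=n-2$ at the boundary indices). In each case the ``low'' part $a-2,b-2,n-b-1,q-1$ is a genuine base-$p$ digit block and the ``high'' part $n-a,n-b,b-1,n-2$ lies below $q$, so Lucas's Theorem factors every binomial in \eqref{spam} as a product $\binom{\text{low}}{\text{low}}\binom{\text{high}}{\text{high}}$. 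A short case check then yields $m_{1,1}=(-1)^{j+2}\binom{k-2}{j}=1$ (since $\binom{(n-2)q+(q-1)}{j}\equiv\binom{q-1}{j}\equiv1\bmod p$, using that $j$ is even), and $m_{a,1}=0$ for $2\leqslant a\leqslant n-1$ (the first binomial has top smaller than bottom, the second reduces to $\binom{a-2}{j}=0$ as $a-2<j$); and for $2\leqslant a\leqslant n-1$, $2\leqslant b<n+1-a$ one of the two Lucas products always contains a factor $\binom{x}{y}$ with $x<y$ — namely $\binom{n-a}{n-b}$ when $a>b$, $\binom{a-2}{b-2}$ when $a<b$, $\binom{a-2}{n-a-1}$ on the diagonal (here $a\leqslant\tfrac n2$ forces $a-2<n-a-1$), and $\binom{a-2}{n-b-1}$ in the remaining off-diagonal slots (here $a+b\leqslant n$ forces $n-b-1>a-2$). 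This establishes the stated shape of $M(j,j+2,q)$; along the way $m_{1,n}=-[\binom{k-2}{j}+(-1)^{j+1}\binom{k-2}{k-2}]=-(1+(-1)^{j+1})=0$, so the last column vanishes.

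Finally I would compute the characteristic polynomial of $M(j,j+2,q,t)$. Reinserting the $t$-powers via \eqref{EqCoeffMj}, the first column is $(t^{j+1},0,\dots,0,t^{j+1})$, the last column is identically zero, and for $2\leqslant a\leqslant n$ the unique nonzero entry in row $a$ is the antidiagonal one, equal to $(-1)^{j+2}t^{j+1+(n-a)(q-1)}$. Expanding $\det(M(j,j+2,q,t)-X\cdot{\bf Id}_n)$ along its zero last column produces a factor $-X$ times the analogous determinant of the top-left $(n-1)\times(n-1)$ block; expanding that along its first column $(t^{j+1}-X,0,\dots,0)$ produces a factor $(t^{j+1}-X)$ times the determinant of the purely antidiagonal $(n-2)\times(n-2)$ block $M''$ on rows and columns $2,\dots,n-1$ (this is where the unknown entries $m_{1,b}$ drop out). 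Reindexing $a\mapsto a-1$ identifies $M''$ with a genuine antidiagonal matrix whose opposite antidiagonal entries multiply to $t^{2(j+1)+(n-1)(q-1)}=t^{k}$, so $\det(M''-X\cdot{\bf Id}_{n-2})=(X^2-t^k)^{(n-2)/2}$. Altogether $\det(M(j,j+2,q,t)-X\cdot{\bf Id}_n)=-X(t^{j+1}-X)(X^2-t^k)^{(n-2)/2}$, giving the eigenvalues $0$, $t^{j+1}$ and $\pm t^{k/2}$.

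The main obstacle is the middle step: organizing the Lucas-theorem casework cleanly enough that the vanishing of all off-antidiagonal entries above (and, by symmetry 1, below) the antidiagonal is transparent, and handling correctly the boundary indices $a\in\{1,n\}$, $b\in\{1,n\}$ where the digit-block descriptions need slight adjustment. The determinant computation is then essentially bookkeeping, the only subtlety being to track the index shift carefully so that the lower-right block is seen to be exactly antidiagonal rather than merely close to it.
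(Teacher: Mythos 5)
Your proposal is correct and follows essentially the same route as the paper: evaluate the first column and the entries above the antidiagonal via the digit decompositions $j+(n-a)(q-1)=(n-a)q+(a-2)$ and Lucas's Theorem, invoke the symmetries of Section \ref{SecSymmetry} for the remaining entries, and leave the first-row entries uncomputed since they drop out of the characteristic polynomial. Your explicit cofactor expansion along the zero last column and then the first column just makes precise the determinant computation the paper only states.
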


\begin{proof}
We immediately note that if $M(j,j+2,q)$ has the above form, then the matrix $M(j,j+2,q,t)$ has characteristic polynomial
\[ \det (M(j,j+2,q,t)-X\!\!\cdot{\bf Id}_n)= (X^2-t^{j+1}X)\cdot \left\{ \begin{array}{ll}
(X^2-t^k)^{\frac{n-2}{2}} & {\rm if\ }n\ {\rm is\ even} \\
\ & \\
(X^2-t^k)^{\frac{n-3}{2}}(-X+(-1)^{j+2}t^{\frac{k}{2}}) & {\rm if\ }n\ {\rm is\ odd} \end{array}\right. ,\]
hence the eigenvalues are 0, $t^{j+1}$ and $\pm t^{\frac{k}{2}}$.

\noindent On the first column we have
\[ m_{1,1}=(-1)^{j+2}\binom{j+(j+1)(q-1)}{j}=(-1)^{j+2}\binom{jq+q-1}{j}=1 \]
(by Lucas's Theorem \ref{KummerThm} \footnote{Recall that $\binom{q-1}{j}\equiv(-1)^j\pmod{p}$
for $0\leqslant j\leqslant q-2$.
}), and
\[ m_{a,1}=-\left[\binom{(j+2-a)q+a-2}{jq+q-1}+(-1)^{j+1}\binom{(j+2-a)q+a-2}{j}\right]=0 \quad 2\leqslant a\leqslant n-1=j+1 \]
(as usual the first binomial can be nonzero only if $a-2=q-1$, and the second one can be nonzero only if
$a-2\geqslant j$ and these conditions cannot hold).

\noindent In a similar way one proves that in the columns between 2 and $\frac{n}{2}$ (or $\frac{n+1}{2}$),
above the antidiagonal one has
\[ m_{a,a}=(-1)^{j+2}\binom{(j+2-a)q+a-2}{(a-1)q+j-a+1} = 0 \quad 2\leqslant a\leqslant \frac{n}{2}\ {\rm (or\ }\frac{n-1}{2}{\rm)}\]
and
\[ m_{a,b}=-\left[\binom{(j+2-a)q+a-2}{(j+2-b)q+b-2}+(-1)^{j+1}\binom{(j+2-a)q+a-2}{(b-1)q+j-b+1}\right]=0 \]
for $2\leqslant b\leqslant \frac{n}{2}$ (or $\frac{n+1}{2}$) and $2\leqslant a\leqslant n-b$
(note that the coefficients of the first line, except $m_{1,1}$, were not computed but they do not affect
the characteristic polynomial).
\end{proof}

Diagonalizability is not immediately clear because of the first row. We give one example in low dimension.

\begin{exe}\label{Ex359}
{\em We give full details for the matrix
\[ M(3,5,9,t)=\left(\begin{array}{ccccc}
t^4  & -t^{12} & 0       & -t^{28} & 0 \\
0    & 0       & 0       & -t^{28} & 0 \\
0    & 0       & -t^{20} & 0       & 0 \\
0    & -t^{12} & 0       & 0       & 0 \\
-t^4 & 0       & 0       & 0       & 0 \end{array} \right) \]
with $k=40$, $m=4$ and $C_3=\{\c_3,\c_{11},\c_{19},\c_{27},\c_{35}\}$.
The eigenvalues are 0, $t^4$, $t^{20}$ and $-t^{20}$ (the last one with multiplicity 2).

\noindent Oldforms are easy: $\c_{35}$ is an oldform of eigenvalue 0 and $\c_3-\c_{35}$ is an oldform of eigenvalue $t^4$.
Note that (again with the notations of \cite{Cor}) the eigenvalue $t^4$ comes from the (old) cusp form $h^4$,
which is a generator for $S^1_{40,4}(GL_2(A))$ (see \cite[Proposition 4.3 and Proposition 4.6.2]{Cor} and
\cite[Corollary 7.6]{G2}).

\noindent Using the formulas of Section \ref{SecAntidiagonalNewforms} one checks that\begin{itemize}
\item $\c_{19}$ and
\[ \c:=(t^{24}+t^{16})\c_3+(t^{24}+t^8)\c_{11}+(1+t^{16})\c_{27}+(1+t^8)\c_{35} \]
represent (independent) newforms of eigenvalue $-t^{20}$ (moreover the Fricke involution acts on them with multiplication
by $t^{-16}$);
\item the cocycle
\[ \c':=(t^{24}-t^{16})\c_3+(t^{24}-t^8)\c_{11}+(1-t^{16})\c_{27}+(1-t^8)\c_{35} \]
represents a newform of eigenvalue $t^{20}$.
\end{itemize} }
\end{exe}

As a final example we write down $M(1,4,q,t)$ (for any $q>2$) for two reasons:\begin{itemize}
\item it provides an example for all $q$ hinting at the fact that diagonalizability depends more on the
symmetry of the matrices than on $q$ (except of course for the $q$ even case);
\item it is outside our parameters (indeed $n=j+3$ here) and it completes the case $n=4$, indeed Theorems
\ref{ThmAntidiagonal}, \ref{Thmj=0} and \ref{Thmn=j+2} together with this example show that $M(j,n,q,t)$
is diagonalizable for any $n\leqslant 4$ for odd $q$. In characteristic 2 $M(j,n,q,t)$, with $2\leqslant n\leqslant 4$,
is diagonalizable only for $n=2$ and $j=0$ or $n=3$ and $j=0,1$.
\end{itemize}

Using Lemma \ref{KummerThm} we have
\[ M(1,4,q,t)=\left( \begin{array}{cccc} 2t^2 & -2t^{2+(q-1)} & -2t^{2+2(q-1)} & t^{2+3(q-1)} \\
t^2 & -t^{2+(q-1)} & -2t^{2+2(q-1)} & t^{2+3(q-1)} \\
0 & -t^{2+(q-1)} & 0 & 0 \\
-t^2 & 0 & 0 & 0 \end{array} \right) \] whose characteristic
polynomial is $X(X+t^{2+(q-1)}-2t^2)(X^2-t^k)$ (where
$k=4+3(q-1)\,$). It is obviously diagonalizable in odd
characteristic because it has distinct eigenvalues (slopes are 2 and
$\frac{k}{2}$) and non diagonalizable in characteristic 2 because of
the inseparable eigenvalue $t^{\frac{k}{2}}$. Note that $2t^2 -t^{2+(q-1)}$ has to be
the eigenvalue associated to the oldform $h^2g$ (notations as in \cite{Cor} and $q>2$).

\section{Links to tables and hints for future research}\label{SecTables}
Here we collect some speculations and conjectures which come from data we have been gathering while trying to understand how
eigenvalues behave when changing weight and type. \\
First of all we warn the reader that eigenvalues and slopes appearing here depend on the normalization used in Section
\ref{SecGamma1}; so, for example, we will find slopes $\frac{k}{2}$ instead of $m-\frac{k}{2}$ for newforms.

With the software Mathematica (\cite{W}) we implemented formula \eqref{Ttcj} to calculate the cha\-rac\-te\-ri\-stic
polynomials of about 250 matrices associated with $U_t$ for $q$ even up to $2^8$ and odd $k\geqslant q+3$ (it is easy
to see that matrices are diagonalizable for $k\leqslant q+2$ because almost all blocks are of dimension 1).
Those polynomials were computed when we were looking for inseparable eigenvalues, that is why only odd $k$ were taken into account.
Table 1 presents the collection of these polynomials and it can be
downloaded at the following link \href{https://docs.google.com/viewer?a=v&pid=sites&srcid=ZGVmYXVsdGRvbWFpbnxtYXJpYXZhbGVudGlubzg0fGd4OjI3OGIwNDUyMmVmODI2NjA}{[click here]}.
Furthermore, in Table 2 we present the characteristic polynomials of some blocks $C_j$ arising from $\Gamma_0(t)$ and computed using formula \eqref{spam} (to download click \href{https://docs.google.com/viewer?a=v&pid=sites&srcid=ZGVmYXVsdGRvbWFpbnxtYXJpYXZhbGVudGlubzg0fGd4OjJkMTVjMWU1MDdmMmIwMDM}{here}).\\
Finally, using Pari/GP (\cite{PARI2}) we computed the characteristic polynomials and slopes of $\T_t$ acting on
$M_{k}(GL_2(A))$ for $q=2$ and $4\leqslant k\leqslant 98$ in Table 3 (to download \href{https://docs.google.com/viewer?a=v&pid=sites&srcid=ZGVmYXVsdGRvbWFpbnxtYXJpYXZhbGVudGlubzg0fGd4Ojc3MzY4NzBlOGRjNGE5OQ}{click here}).
Please note that such data are not related to cuspidal forms only. In particular, the first slope is always
equal to the weight and it is not coming from a cuspidal form. This should be taken into account when looking at the characteristic polynomial.

The whole collection of data lead to the following remarks/speculations.
\begin{enumerate}
\item Surprisingly, we found fractional slopes (hence inseparable eigenvalues) also at level one.
For example, look at weight $k=9$ to find the slope $\frac{5}{2}$ and $k=15$ for the slope $\frac{7}{2}$.
This leads to a natural question: can we find inseparable eigenvalues in odd characteristic as well?\\
In order to find an answer we computed the characteristic polynomials associated
with $U_t$ acting on $S^1_{k}(\Gamma_1(t))$ for $q=3$ and $6\leqslant k \leqslant 42$ and also for even $k$ up to $62$ in Table 4
(to download click \href{https://docs.google.com/viewer?a=v&pid=sites&srcid=ZGVmYXVsdGRvbWFpbnxtYXJpYXZhbGVudGlubzg0fGd4OjZkMzU3ZTIwMzQxNjVmN2Q}{here}).
We included the odd weights (where there are no $\Gamma_0(t)$-invariant forms) as far as possible for completeness, but factorizing multivariate polynomials in positive characteristic requires plenty of effort and time, even with a good
software. We could go on for even weights because, by results of Section \ref{SecSlopes}, we know we could always try to factor out $(x^2\pm t^{k/2})$.
The presence of such factor confirms our impression that the action of $U_t $ on newforms is ``morally'' antidiagonal. 
\item The characteristic polynomials for level one forms (Table 3) have non trivial constant term. For diagonalizability reasons
(as already mentioned after Remark \ref{RemOld}), we believe that $\T_t$ has never eigenvalue 0 at level 1. This is confirmed by 
the results of Table 4 where, for even $k$, the multiplicity of the eigenvalue 0 is always equal to the sum of the multiplicities of the old eigenvalues.
However, it is natural to ask if 0 can be an eigenvalue for $\T_t$ if the level $\mathfrak{m}$ is coprime with $t$.
\item The correspondence between multiplicity of 0 and number old eigenavalues mentioned above seems to suggest that there should be trivial intersection between 
new and oldforms.
\item Last, we would like to observe that something as \cite[Conjecture 1]{GM1} can be considered in our setting as well.
Let us illustrate the details.  Let $g_k\in \F_2[t][X]$, for $k\in\Z$, be the characteristic polynomial of $U_t$
acting on $S^1_k(\Gamma_1(t))$.  If $\alpha\in \Q$, then let $d(k,\alpha)$ denote the number of roots of $g_k$ in
$\overline{\F_2(t)}$ which have $t$-adic valuation equal to $\alpha$.
In our context, Gouv\^ea-Mazur's Conjecture 1 should look like

\begin{conj} If $k_1,k_2\in\Z$ are both at least $2\alpha+2$ and $k_1\equiv k_2\pmod {2^{n-1}}$ for
some $n\geqslant \alpha$, then $d(k_1,\alpha) = d(k_2,\alpha)$.
\end{conj}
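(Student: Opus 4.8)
The plan is to stay within the Gouv\^ea--Mazur circle of ideas and to exploit the completely explicit shape of the matrices of Section \ref{SecGamma1}. For $q=2$ there is a single class $C_0$, so after the normalization of Section \ref{SecGamma1} the operator $U_t$ on $S^1_k(\G_1(t))$ is the full matrix $M=M(0,k-1,2,t)$, and by \eqref{EqCoeffMj} its $b$-th column is exactly $t^b$ times a column vector with entries in $\F_2$, each entry a sum of two binomial coefficients of the form $\binom{k-1-a}{c}$ with $0\leqslant c\leqslant b-1$. Thus $M=\widetilde{M}D$ with $D=\mathrm{diag}(t,t^2,\dots,t^{k-1})$ and $\widetilde{M}\in\mathcal{M}_{k-1}(\F_2)$. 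The coefficient of $X^{k-1-i}$ in $g_k$ is, up to sign, the sum of the $i\times i$ principal minors of $M$; the principal minor on an index set $S$ is divisible by $t^{\sum_{b\in S}b}$, so $v_t(c_i)\geqslant\binom{i+1}{2}$, the $t$-adic Newton polygon of $g_k$ lies on or above the convex ``Hodge'' polygon with vertices $\bigl(i,\binom{i+1}{2}\bigr)$, and consequently at most $2\alpha-1$ roots of $g_k$ can have $t$-valuation $\leqslant\alpha$.

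First I would isolate a threshold $B=B(\alpha)$, polynomial in $\alpha$ (a crude count gives $B=O(\alpha^{2})$: the slopes-$\leqslant\alpha$ part of the Newton polygon has height $<\alpha(2\alpha-1)+1$, and a principal minor can contribute to $g_k$ below that height only if its index set $S$ satisfies $\sum_{b\in S}b<B$, hence $\max S<B$), such that the slopes-$\leqslant\alpha$ part of the Newton polygon of $g_k$ --- in particular the quantity $d(k,\alpha)$ --- is completely determined by the top-left $B\times B$ corner of $\widetilde M$ together with the matching powers of $t$. Next I would invoke Lucas's Theorem (Lemma \ref{KummerThm}): every binomial coefficient $\binom{k-1-a}{c}$ with $0\leqslant c\leqslant B$ reduces mod $2$ to a product over the finitely many binary digit positions $0,\dots,\lfloor\log_2 B\rfloor$, so $\binom{k-1-a}{c}\bmod 2$ depends only on $a$ and on $k\bmod 2^{\lceil\log_2(B+1)\rceil}$. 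Hence, as soon as $2^{n-1}\geqslant B+1$ (so that $n-1\geqslant\log_2(B+1)$) and $k_1,k_2\geqslant B$ (so that the corner exists), the congruence $k_1\equiv k_2\pmod{2^{n-1}}$ forces the two top-left $B\times B$ corners of $\widetilde M$ to coincide entry by entry; combined with the previous step this yields $d(k_1,\alpha)=d(k_2,\alpha)$.

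The main obstacle is quantitative. The exponential bound $2^{n-1}\geqslant 2^{\alpha-1}$ furnished by $n\geqslant\alpha$ is more than enough, but the argument above needs the weights to be of size $\gtrsim\alpha^{2}$, whereas the conjecture only assumes $k_i\geqslant2\alpha+2$; closing this gap is exactly the delicate part. It would require replacing the wasteful estimate ``$\max S$ up to $\alpha(2\alpha-1)$'' by a genuine understanding of how the singularities mod $2$ of the submatrices $\widetilde M[S,S]$ propagate --- essentially a ``stripping off the low-slope part'' argument for these binomial matrices, showing that the low-slope Newton polygon already stabilizes on a corner of size $O(\alpha)$ (which is consistent with the ``morally antidiagonal'' behaviour of $U_t$ on newforms and with the data of Section \ref{SecTables}, where the low slopes are sparse). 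One should also keep in mind that the sharp form of the analogous conjecture over $\Q$ (\cite{GM1}) was later found to be false, so part of the task is to decide whether $2\alpha+2$ is truly correct or must be replaced by a slowly growing function of $\alpha$. A conceptually cleaner alternative would be to realize all the $U_t$ simultaneously as specializations of a single $t$-adic family of compact operators on a space of ``overconvergent'' Drinfeld cusp forms and then run the usual continuity-of-Newton-polygons argument; but this machinery is not available in the present paper.
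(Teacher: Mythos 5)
The statement you were given is stated in the paper as a \emph{conjecture}: the authors offer no proof, only numerical evidence (characteristic polynomials for $q=2$ and weights up to $98$) and the explicit analogy with \cite[Conjecture 1]{GM1}. So there is no proof in the paper to measure yours against, and your own text, quite candidly, is a strategy rather than a proof. The steps you do carry out are correct: for $q=2$ there is a single class $C_0$, so $U_t$ on $S^1_k(\G_1(t))$ is the full $(k-1)\times(k-1)$ matrix, whose $b$-th column is $t^b$ times a vector over $\F_2$ of sums of binomial coefficients $\binom{k-2-a}{c}$ with small lower entry; a principal minor on an index set $S$ is divisible by $t^{\sum_{b\in S}b}\,$, so the $t$-adic Newton polygon of $g_k$ lies above the polygon with vertices $\bigl(i,\binom{i+1}{2}\bigr)$ and at most $2\alpha-1$ roots have valuation $\leqslant\alpha$; and Lucas's Theorem (Lemma \ref{KummerThm}) does show that the relevant corner of the coefficient matrix mod $2$ depends only on $k\bmod 2^{s}$ once all lower binomial entries are $<2^{s}$. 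This is a sensible Gouv\^ea--Mazur-style reduction and, if completed, would prove a \emph{weak form} of the conjecture.

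The genuine gap is the one you name yourself, and it is not a technicality: your argument localizes $d(k,\alpha)$ in a corner of size $B=O(\alpha^{2})$, so it yields stability only under the hypotheses $k_i\gtrsim\alpha^{2}$ and $2^{n-1}\geqslant B+1$, whereas the conjecture asserts it for $k_i\geqslant 2\alpha+2$ and $n\geqslant\alpha$ (note also that for small $\alpha$, say $\alpha\leqslant 6$, one has $2^{\alpha-1}<\alpha^{2}$, so even the congruence hypothesis of the conjecture is not covered by your modulus). Bridging this requires exactly the ``stabilization on a corner of size $O(\alpha)$'' input you acknowledge is missing, i.e.\ a real structural statement about which principal minors of these binomial matrices actually govern the low-slope part of the Newton polygon; nothing in Sections \ref{SecGamma1}--\ref{Char2Sec} supplies it. Your closing caveat is also well taken: since the sharp numerical form of the classical Gouv\^ea--Mazur conjecture is known to fail, one should not assume that the bounds $2\alpha+2$ and $n\geqslant\alpha$ are the right ones to aim for; a completed version of your argument would more plausibly establish the conjecture with these replaced by (explicit) larger thresholds.
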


\noindent For example, let us cast a glance over the following data.
\begin{itemize}
\item[$\circ$] {Let $\alpha=4$ and consider $8\leqslant k\leqslant 98$. We have that:
\begin{enumerate}
\item {$d(k,4)=1$ for $k\equiv 4 \pmod {2^3}$;}
\item {$d(k,4)=3$ for $k\equiv 0\pmod {2^3}$;}
\item {$d(k,4)=0$ for $k\not\equiv 0,4 \pmod {2^3}$.}
\end{enumerate}}
\item[$\circ$] {Let $\alpha=5/2$ and consider $8\leqslant k\leqslant 98$. We have that:
\begin{enumerate}
\item {$d(k,5/2)=2$ for $k\equiv 1\pmod {2^2}$;}
\item {$d(k,5/2)=0$ for $k\not\equiv 1\pmod {2^2}$.}
\end{enumerate}}
\item[$\circ$] {Let $\alpha=8$ and consider $18\leqslant k\leqslant 98$. We have that:
\begin{enumerate}
\item {$d(k,8)=5$ for $k\equiv 0 \pmod {2^4}$;}
\item {$d(k,8)=1$ for $k\equiv 8\pmod {2^4}$;}
\item {$d(k,8)=0$ for $k\not\equiv 0,8 \pmod {2^4}$.}
\end{enumerate}}

\end{itemize}

\end{enumerate}

\end{document}